\date{}
\newcommand{\R}{{\mathbb R}}
\newcommand{\N}{{\mathbb N}}
\newcommand{\C}{{\mathbb C}}
\newcommand{\D}{{\mathcal D}}
\newcommand{\E}{{\mathcal E}}
\newcommand{\const}{{\rm const}}
\newcommand{\esslim}{\mathop{\rm ess\,lim}}
\newcommand{\supp}{\mathop{\rm supp}}
\renewcommand{\div}{{\rm div}}
\renewcommand{\>}{\rangle}
\newcommand{\codim}{\mathop{\rm codim}}
\renewcommand{\Im}{\mathop{\rm Im}}
\renewcommand{\S}{\mathcal{S}}
\numberwithin{equation}{section}
\theoremstyle{plain}
\newtheorem{theorem}{Theorem}[section]
\newtheorem{lemma}{Lemma}[section]
\newtheorem{proposition}{Proposition}[section]
\newtheorem{corollary}{Corollary}[section]
\theoremstyle{definition}
\newtheorem{definition}{Definition}[section]
\newtheorem{remark}{Remark}[section]
\title{On one criterion of the uniqueness of generalized solutions for linear transport equations with discontinuous coefficients}
\author{E.Yu.~Panov\footnote{Novgorod State University, e-mail: \textbf{Eugeny.Panov@novsu.ru}}}
\begin{document}

\maketitle
\begin{abstract}
We study generalized solutions of multidimensional transport equation with bounded measurable solenoidal field of coefficients $a(x)$. It is shown that any generalized solution satisfies the renormalization property if and only if the operator $a\cdot\nabla u$, $u\in C_0^1(\R^n)$ in the Hilbert space $L^2(\R^n)$ is an essentially skew-adjoint operator, and this is equivalent to the uniqueness of generalized solutions. We also establish existence of a contractive semigroup, which provides generalized solutions, and give a criterion of its uniqueness.
\end{abstract}

\section{Introduction}\label{sec1}

We study the following evolutionary linear transport equation
\begin{equation}
\label{1}
u_t+\sum_{i=1}^n a_i(x)u_{x_i}=0,
\end{equation}
where $u=u(t,x)$, $(t,x)\in\Pi=(0,+\infty)\times\R^n$.

In the case when the field of coefficients $a=(a_1(x),\ldots,a_n(x))\in C^1(\R^n,\R^n)$ the
theory of solutions (both classical and generalized) to the Cauchy problem
for equation (\ref{1}) is well-known and it is covered by the method
of characteristics. The case when the coefficients are generally discontinuous
is more interesting and more complicated. The well-posedness of Cauchy problem for such equations is established  under some additional restrictions on
coefficients. Some results in this direction could be found in papers
\cite{DiL,Amb}. The equations like (\ref{1}) with general solenoidal vector
of coefficients naturally arise in the study of some important nonlinear
conservation laws (~see for instance, \cite{ABL}~). The solenoidality condition $\div a(x)=0$ (in distributional sense)
allows to rewrite the equation in divergence form
$$
u_t+\div_x (a(x)u)=0
$$
and introduce generalized
solutions (g.s.) of the corresponding Cauchy problem with initial data
\begin{equation}\label{2}
u(0,x)=u_0(x).
\end{equation}
The coefficients $a_i$, $i=1,\ldots,n$ are supposed to be bounded: $a(x)\in L^\infty(\R^n,\R^n)$.
We denote $\bar\Pi=[0,+\infty)\times\R^n$.

\begin{definition}\label{def1}
A function $u=u(t,x)\in L^1_{loc}(\bar\Pi)$
is called a g.s. of the problem
(\ref{1}), (\ref{2}) if for all $f=f(t,x)\in C_0^\infty(\bar\Pi)$
\begin{equation}\label{3}
\int_{\Pi} [uf_t+au\cdot\nabla_x f]dtdx+\int_{\R^n} u_0(x)f(0,x)dx=0.
\end{equation}
Here and below we use the notation $\cdot$ for the scalar multiplication on $\R^n$.
\end{definition}

Taking in (\ref{3}) test functions $f\in  C_0^\infty(\Pi)$, we derive that
\begin{equation}\label{4}
u_t+\div_x (a(x)u)=0
\end{equation}
in the sense of distributions on $\Pi$ (~in $\D'(\Pi)$~). Besides, (\ref{3}) readily implies that
\begin{equation}\label{5}
\esslim_{t\to 0} u(t,\cdot)=u_0 \ \mbox{ in } \D'(\R^n).
\end{equation}
Actually, (\ref{3}) is equivalent to (\ref{4}), (\ref{5}). For the details see \cite[Proposition 2]{PaTr}.

For classical solutions $u(t,x)\in C^1(\bar\Pi)$ of transport equations (\ref{1}), it is clear that compositions
$g(u)$ remain to be solutions for every $g(u)\in C^1(\R)$. This fact, called the renormalization property, is readily follows from the chain rule. For generalized solutions the renormalization property may fail (~cf. \cite{Aiz,CLR}~). This induce us to introduce the specific notion of \textit{a renormalized solution}.

\begin{definition}\label{def2}
A function $u=u(t,x)\in L^1_{loc}(\bar\Pi)$
is called a renormalized solution of the problem (\ref{1}), (\ref{2}) if for any $g(u)\in C(\R)$ such that $g(u_0(x))\in L^1_{loc}(\R^n)$, $g(u(t,x))\in L^1_{loc}(\bar\Pi)$ the function $g(u(t,x))$ is a g.s. of problem (\ref{1}), (\ref{2}) with initial data $g(u_0(x))$.
\end{definition}

We need the following simple a-priory estimate for nonnegative g.s. (below we denote by $|x|$ the Euclidean norm of a finite-dimensional vector $x$ ).

\begin{proposition}\label{pro1}
Let $u=u(t,x)\ge 0$ be a g.s. of the problem
{\rm (\ref{1}), (\ref{2}) }. Then for a.e. $t>0$ for each $R>0$
\begin{eqnarray}
\label{6}
\int_{|x|<R}u(t,x)dx \le\int_{|x|<R+Nt}u_0(x)dx, \\
\label{6a}
\int_{|x|>R+Nt}u(t,x)dx\le\int_{|x|>R}u_0(x)dx,
\end{eqnarray}
where $N=\|a\|_\infty$.
\end{proposition}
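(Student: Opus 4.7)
The strategy is to substitute into the weak identity (\ref{3}) a test function approximating the characteristic function of a suitable space-time cone and to exploit the sign condition $u\ge 0$. Fix $R>0$ and $t_0>0$. Let $\beta\in C^\infty(\R)$ be non-increasing with $\beta=1$ on $(-\infty,0]$ and $\beta=0$ on $[1,\infty)$, and let $\chi\in C^\infty(\R)$ have the same properties. Set $\beta_\epsilon(s)=\beta(s/\epsilon)$ and $\chi_\epsilon(t)=\chi((t-t_0)/\epsilon)$, and let $\rho(x)=\sqrt{|x|^2+\delta^2}$ be a smooth regularization of $|x|$, which satisfies $|\nabla\rho|\le 1$ pointwise.

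For (\ref{6}), I would take
$$f(t,x)=\beta_\epsilon(\rho(x)-R-N(t_0-t))\,\chi_\epsilon(t)\in C_0^\infty(\bar\Pi),$$
which approximates the indicator of the backward cone $\{\rho(x)<R+N(t_0-t),\ 0\le t\le t_0\}$. A direct computation gives
$$f_t+a\cdot\nabla_x f=\beta_\epsilon'(\rho-R-N(t_0-t))\bigl[N+a(x)\cdot\nabla\rho(x)\bigr]\chi_\epsilon(t)+\beta_\epsilon(\cdots)\chi_\epsilon'(t).$$
Since $|a\cdot\nabla\rho|\le\|a\|_\infty|\nabla\rho|\le N$, the bracket is $\ge 0$, while $\beta_\epsilon'\le 0$ and $\chi_\epsilon\ge 0$, so the first summand is pointwise $\le 0$. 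Multiplying by $u\ge 0$ and inserting $f$ into (\ref{3}) yields
$$\int_{\R^n}u_0(x)f(0,x)\,dx\ \ge\ -\iint_\Pi u(t,x)\,\beta_\epsilon(\cdots)\,\chi_\epsilon'(t)\,dt\,dx.$$
Letting $\epsilon\to 0$ and then $\delta\to 0$, the right-hand side tends to $\int_{|x|<R}u(t_0,x)\,dx$ and the left-hand side to $\int_{|x|<R+Nt_0}u_0(x)\,dx$, which is (\ref{6}).

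The estimate (\ref{6a}) is proved analogously using the complementary non-decreasing cutoff $\tilde\beta=1-\beta$ and a forward cone. Take
$$f(t,x)=\tilde\beta_\epsilon(\rho(x)-R-Nt)\,\phi_M(x)\,\chi_\epsilon(t),$$
where $\phi_M\in C_0^\infty(\R^n)$ is an additional radial cutoff (equal to $1$ on $|x|\le M$, supported in $|x|\le 2M$) needed because $\tilde\beta_\epsilon(\rho(x)-R-Nt)$ is nonzero for all large $|x|$. The analogous computation gives a ``drift'' term of the form $\tilde\beta_\epsilon'(\cdots)[-N+a\cdot\nabla\rho]\phi_M\chi_\epsilon$, which is again of favorable sign since $\tilde\beta_\epsilon'\ge 0$ and $-N+a\cdot\nabla\rho\le 0$. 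The remaining boundary error produced by $\nabla\phi_M$ has a definite sign and vanishes as $M\to\infty$ by monotone convergence.

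The principal technical obstacle is the very low regularity: a g.s.\ is only in $L^1_{loc}(\bar\Pi)$ and has no a priori time continuity, so the convergence $-\chi_\epsilon'\to\delta_{t_0}$ under the time integral is justified only at Lebesgue points of the relevant scalar function of $t$, which is why the estimates hold for a.e.\ $t>0$ rather than all $t$. Everything else is routine: dominated/monotone convergence controls the limits $\delta\to 0$ and $M\to\infty$, and the uniform bound $|\nabla\rho|\le 1$ is exactly what produces the propagation speed $N=\|a\|_\infty$.
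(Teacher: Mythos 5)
Your proof of (\ref{6}) is correct and is essentially the paper's argument: a smooth cutoff of the backward cone $|x|<R+N(t_0-t)$ times a smoothed indicator of $\{t<t_0\}$, the observation that $|a\cdot\nabla\rho|\le N$ makes the drift term have a favorable sign when multiplied by $u\ge 0$, and a Lebesgue-point argument in $t$ to recover the a.e.\ statement. (The paper uses $p(r-Nt-|x|)$ with a free parameter $r$ set to $R+Nt$ at the end, and is more careful to fix a single full-measure set of Lebesgue points common to a countable family of spatial profiles before sending the profile to the sharp indicator; you send both regularization parameters to zero at once, which needs the same decoupling, but this is cosmetic.)

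For (\ref{6a}) there is a genuine gap. Your exterior cutoff $\tilde\beta_\epsilon(\rho(x)-R-Nt)$ is not compactly supported, and the static truncation $\phi_M$ you introduce produces the error term $\iint u\,\tilde\beta_\epsilon\,(a\cdot\nabla\phi_M)\,\chi_\epsilon\,dt\,dx$. Contrary to your claim, this term does \emph{not} have a definite sign: $a\cdot\nabla\phi_M=\phi_M'(|x|)\,a(x)\cdot x/|x|$ and $a(x)\cdot x/|x|$ changes sign. Nor does it obviously vanish as $M\to\infty$: the bound is $CM^{-1}\int_0^{T}\int_{M<|x|<2M}u\,dx\,dt$, and for $u$ known only to lie in $L^1_{loc}$ this need not tend to zero (monotone convergence is not applicable, since the integrand is not monotone in $M$). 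The argument can be repaired, but it requires an extra step you have not supplied: either (a) observe that (\ref{6a}) is vacuous unless $u_0\in L^1(\R^n)$, then use the already-proved (\ref{6}) with $R\to\infty$ to get $\|u(t,\cdot)\|_1\le\|u_0\|_1$ for a.e.\ $t$, which makes the error term $O(1/M)$; or (b) do what the paper does, namely replace the static $\phi_M$ by an outer cutoff whose boundary moves \emph{inward} at speed $N$ (the paper's test function is built from $p(R-Nt-|x|)-p(r+Nt-|x|)$, a compactly supported annulus), so that the outer boundary also contributes a drift term of favorable sign and no uncontrolled error appears; the outer radius is then sent to infinity only after the signed inequality has been obtained.
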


\begin{proof}
Choose a function
$\beta(s)\in C_0^\infty(\R)$ such that
$\supp\beta(s)\subset [0,1]$, $\beta(s)\ge 0$, and
$\displaystyle\int\beta(s)ds=1$ and set for
${\nu\in\N}$  \ $\beta_\nu(s)=\nu\beta(\nu s)$,
$\displaystyle\theta_\nu(t)=\int_{-\infty}^t \beta_\nu(s)ds$. It is clear that
$\beta_\nu(s)\in C_0^\infty(\R)$,
$\supp\beta_\nu(s)\subset [0,1/\nu]$, $\beta_\nu(s)\ge 0$,
$\displaystyle\int\beta_\nu(s)ds=1$. Therefore, the sequence
$\beta_\nu(s)$ converges to Dirac
$\delta$-function in $\D'(\R)$ as $\nu\to\infty$, and the sequence
$\theta_\nu(t)$ is bounded
(~$0\le\theta_\nu(t)\le 1$~) and converges pointwise to the Heaviside function
$\displaystyle\theta(t)=\left\{\begin{array}{ll} 0, & t\le 0, \\ 1, & t>0.
\end{array}\right. $
Let $p=p(s)\in C^\infty(\R)$, $p'(s)\ge 0$, $p(s)=0$ for $s\le -1$,
$p(s)=1$ for $s\ge 0$. Set for $t_0>0$, $r>Nt_0$,
$\nu\in\N$ \ $f=f(t,x)=p(r-Nt-|x|)\theta_\nu(t_0-t)$.
Then $f\in C_0^\infty(\bar\Pi)$
and by identity (\ref{3})
\begin{eqnarray}
\label{7}
\theta_\nu(t_0)\int_{\R^n} u_0(x)p(r-|x|)dx-\int_0^{+\infty}\!\!
\int_{\R^n}u(t,x)p(r-Nt-|x|)dx
\delta_\nu(t_0-t)dt
\nonumber\\
-\int_\Pi\![N\!+\!a(x)\cdot x/|x|]
p'(r-Nt-|x|)u(t,x)\theta_\nu(t_0-t)dtdx\!=\!0.
\end{eqnarray}
Since $|a(x)\cdot x/|x||\le |a(x)|\le N$ and $p'(s)\ge 0$, the last integral in
(\ref{7}) is nonnegative. Therefore, (\ref{7}) implies the inequality
\begin{equation}
\label{8}
\int_0^{+\infty}\!\int_{\R^n}u(t,x)p(r-Nt-|x|)dx\delta_\nu(t_0-t)dt\le
\theta_\nu(t_0)\int_{\R^n}u_0(x)p(r-|x|)dx.
\end{equation}
Let
$\E\subset\R_+$ be the set of full measure consisting of values $t>0$ such that
$u(t,x)\in L^1_{loc}(\R^n)$ and $t$ is a Lebesgue point
of functions
$\displaystyle F_r(t)=\int_{\R^n}u(t,x)p(r-Nt-|x|)dx$
for all rational $r$. Since $F_r(t)$ depends continuously on the parameter
$r$ then $t\in\E$ is a Lebesgue point of $F_r(t)$ for all real $r$.
Let $t_0\in\E$. Passing to the limit in (\ref{8}) as $\nu\to\infty$,
we obtain that
$$
F_r(t_0)\le F_r(0)=\int_{\R^n}u_0(x)p(r-|x|)dx.
$$
Thus  $\forall t=t_0\in\E$, $r>Nt$
\begin{equation}
\label{9}
\int_{\R^n}u(t,x)p(r-Nt-|x|)dx\le
\int_{\R^n}u_0(x)p(r-|x|)dx.
\end{equation}
Obviously, the set $\E$ of full measure could be chosen common for a countable
family of functions $p=p_k(s)$, approximating the Heaviside function. Taking $p=p_k$ in (\ref{9})
and passing to the limit as $k\to\infty$, we conclude that
$\forall t\in\E$, $r>Nt$
$$
\int_{|x|<r-Nt}u(t,x)dx\le
\int_{|x|<r}u_0(x)dx
$$
and to complete the proof of (\ref{6}) it only remains to substitute
$r=R+Nt$ in the obtained inequality.

Similarly, to establish (\ref{6a}) we choose the test function $f=f(t,x)=\chi(t,x)\theta_\nu(t_0-t)\in C^\infty(\bar\Pi)$, where $\chi(t,x)=(p(R-Nt-|x|)-p(r+Nt-|x|))$, $R>r>0$, $R>Nt_0$.
By (\ref{3}) we obtain
\begin{eqnarray}
\label{7a}
\theta_\nu(t_0)\int_{\R^n} u_0(x)\chi(0,x)dx-\int_0^{+\infty}\!\!
\int_{\R^n}u(t,x)\chi(t,x)dx
\delta_\nu(t_0-t)dt+
\nonumber\\
\int_\Pi\!u(t,x)[\chi_t\!+\!a(x)\cdot\nabla_x\chi]
\theta_\nu(t_0-t)dtdx\!=\!0.
\end{eqnarray}
Since $\chi_t=-N((p'(R-Nt-|x|)+p'(r+Nt-|x|))\le 0$ while
\begin{eqnarray*}
|a(x)\cdot\nabla_x\chi|\le |a(x)||\nabla_x\chi|\le N|p'(R-Nt-|x|)-p'(r+Nt-|x|)|\le\\ N(p'(R-Nt-|x|)+p'(r+Nt-|x|)),
\end{eqnarray*}
we see that the last integral in
(\ref{7a}) is nonpositive and from (\ref{7}) it follows that
\begin{eqnarray}
\label{8a}
\int_0^{+\infty}\!\int_{\R^n}u(t,x)(p(R-Nt-|x|)-p(r+Nt-|x|))dx\delta_\nu(t_0-t))dt\le\nonumber\\
\theta_\nu(t_0)\int_{\R^n}u_0(x)(p(R-|x|)-p(r-|x|))dx.
\end{eqnarray}
Obviously, the set $\E_1$ of common Lebesgue points of all functions of the kind
$$F(t)=\int_{\R^n}u(t,x)(p(R-Nt-|x|)-p(r+Nt-|x|))dx$$ has full Lebesgue measure. Assuming that $t_0\in\E_1$ and passing to the limit as $\nu\to\infty$, we arrive at the inequality
\begin{eqnarray*}
\int_{\R^n}u(t_0,x)(p(R-Nt_0-|x|)-p(r+Nt_0-|x|))dx\le\\ \int_{\R^n}u_0(x)(p(R-|x|)-p(r-|x|))dx.
\end{eqnarray*}
Taking in this estimate $p=p_k(s)$, $k\in\N$ (recall that this sequence converges to the Heaviside function) and passing to the limit as $k\to\infty$, we obtain that for all $t\in\E_1$
$$
\int_{r+Nt<|x|<R+Nt}u(t,x)dx\le\int_{r<|x|<R}u_0(x)dx.
$$
To complete the proof, we pass to the limit in this inequality as $R\to\infty$ and replace $r$ by $R$.
\end{proof}

Let us introduce the linear operator $A_0=\div (au)=a(x)\cdot\nabla u(x)$ in the real Hilbert space $L^2=L^2(\R^n)$.
This operator is defined on a dense subspace $C_0^1(\R^n)\subset L^2$. For every $u,v\in C_0^1(\R^n)$
\begin{eqnarray*}
(Au,v)_2=\int_{\R^n} (a(x)\cdot\nabla u(x))v(x)dx=-\int_{\R^n}u(x)a(x)\cdot\nabla v(x)dx+ \\
\int_{\R^n} a(x)\cdot\nabla(u(x)v(x))dx=-\int_{\R^n}u(x)a(x)\cdot\nabla v(x)dx=-(u,Av)_2,
\end{eqnarray*}
where we use the fact that $\div a=0$ in $\D'(\R^n)$. Here we denote by $(f,g)_2$ the scalar multiplication in $L^2$: ${(f,g)_2=\int_{\R^n}f(x)g(x)dx}$.

The obtained identity means that $A_0$ is skew-symmetric operator. Therefore, it admits the closure, which we define by $A$. $A$ is a closed skew-symmetric operator: $-A\subset A^*$. It is easy to see that the conjugate operator is defined as follows $v=A^*u$ if and only if $u,v\in L^2$ and $-\div(au)=v$ in $\D'(\R^n)$.

Our main results are the following criteria.

\begin{theorem}\label{main}
(i) The necessary and sufficient condition for any g.s. $u(t,x)\in L^2_{loc}(\bar\Pi)$ to be a renormalized solution of (\ref{1}), (\ref{2}) (with $u_0\in L^2_{loc}(\R^n)$) is that the operator $A$ is skew-adjoint; (ii) The same condition is necessary and sufficient for the uniqueness of any g.s. $u(t,x)\in L^2_{loc}(\bar\Pi)$.
\end{theorem}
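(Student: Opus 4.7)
The plan is to prove the chain
\[
A \text{ skew-adjoint}\;\Longrightarrow\;\text{every g.s.\ in } L^2_{loc}(\bar\Pi) \text{ is renormalized}\;\Longrightarrow\;\text{uniqueness in } L^2_{loc}(\bar\Pi)\;\Longrightarrow\;A \text{ skew-adjoint},
\]
which yields (i) and (ii) simultaneously. The middle implication is immediate from Proposition~\ref{pro1}: if $u_1,u_2$ are g.s.\ with common initial data $u_0$, then $u=u_1-u_2$ is a g.s.\ with zero initial data and $u^2\in L^1_{loc}$; the renormalization hypothesis makes $u^2\ge 0$ a g.s.\ with data $0$, and Proposition~\ref{pro1} then forces $\int_{|x|<R}u^2(t,x)\,dx\le 0$ for a.e.\ $t>0$, so $u\equiv 0$.

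For $A$ skew-adjoint $\Longrightarrow$ renormalization I would use the mollification/commutator method. Setting $u^\varepsilon=u*_x\rho_\varepsilon$, the mollified equation reads $u^\varepsilon_t+a\cdot\nabla u^\varepsilon=r^\varepsilon$ with commutator $r^\varepsilon=a\cdot\nabla u^\varepsilon-(a\cdot\nabla u)*_x\rho_\varepsilon$, and the chain rule yields $g(u^\varepsilon)_t+a\cdot\nabla g(u^\varepsilon)=g'(u^\varepsilon)r^\varepsilon$ for $g\in C^1(\R)$. Passing to the limit $\varepsilon\to 0$ gives the renormalized equation provided $r^\varepsilon\to 0$ in $L^1_{loc}(\bar\Pi)$. \emph{This is the principal obstacle of the proof}, since the classical DiPerna--Lions/Ambrosio commutator estimates require Sobolev or $BV$ regularity of $a$, which is unavailable here. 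The hypothesis $A=-A^*$ must be the substitute: from $D(A)=D(A^*)$, every $w\in L^2$ with $\div(aw)\in L^2$ is the graph-norm limit of a sequence in $C_0^1(\R^n)$, and applying this (after spatial cutoffs) to $u(t,\cdot)$ at a.e.\ $t$ and feeding the approximation through the mollifier should yield $r^\varepsilon\to 0$.

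For uniqueness $\Longrightarrow$ $A$ skew-adjoint I argue by contrapositive: assume at least one deficiency space $\ker(A^*\mp I)\subset L^2(\R^n)$ is nonzero. Since $-A$ is dissipative on $C_0^1(\R^n)$ (the integration by parts in the excerpt gives $\langle-Au,u\rangle_2=0$), Phillips' theorem furnishes a maximal dissipative extension $B$ of $-A$ generating a contraction semigroup $T_t$ on $L^2$, and for every $u_0\in L^2$ the mild solution $T_tu_0$ is a g.s.\ with initial data $u_0$. Two subcases give non-uniqueness: (a) if $\ker(A^*-I)\ne\{0\}$, pick $u_0$ there, so $\div(au_0)=-u_0$ in $\D'(\R^n)$; then $v(t,x)=e^tu_0(x)$ is, by direct verification, a g.s.\ with data $u_0$, and $\|v(t,\cdot)\|_2=e^t\|u_0\|_2>\|u_0\|_2\ge\|T_tu_0\|_2$ for $t>0$, so $v\not\equiv T_tu_0$; (b) if only $\ker(A^*+I)\ne\{0\}$, the Cayley-transform parametrization of maximal dissipative extensions of $-A$ has free part of positive dimension $\dim\ker(A^*+I)$, so distinct extensions $B_1\ne B_2$ exist whose contraction semigroups must disagree on some $u_0\in L^2$, yielding two different g.s.\ with the same initial data. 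In either case uniqueness fails.
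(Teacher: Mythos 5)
Your skeleton (skew-adjoint $\Rightarrow$ renormalization $\Rightarrow$ uniqueness $\Rightarrow$ skew-adjoint) is sound and the middle implication is handled correctly, essentially as in the paper. But both of the hard implications have genuine gaps. For ``$A$ skew-adjoint $\Rightarrow$ renormalization'' you yourself flag the commutator convergence $r^\varepsilon\to 0$ as the principal obstacle, and the proposed fix does not close it: graph-norm density of $C_0^1(\R^n)$ in $D(A^*)$ lets you approximate $u(t,\cdot)$ by smooth $v_k$ with $\div(av_k)\to\div(au)$ in $L^2$, but the commutator $a\cdot\nabla(w*\rho_\varepsilon)-(a\cdot\nabla w)*\rho_\varepsilon$ is not a family of operators uniformly bounded in the graph norm (the first term is only controlled by $\varepsilon^{-1}\|w\|_2$), so the convergence for smooth $w$ cannot be transferred to $w\in D(A^*)$ by density; this is exactly why DiPerna--Lions need Sobolev regularity of $a$. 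The paper avoids commutators altogether: it builds the unitary group $e^{-At}$, proves strong resolvent convergence of the regularized groups (Proposition~\ref{pro3}), deduces renormalization for group solutions (Corollaries~\ref{cor1},~\ref{cor2}) and for stationary solutions (Theorem~\ref{th3}), and then treats an arbitrary g.s.\ by extending it to all of $\R^{n+1}$ and showing that the space-time operator $\partial_t+\div(a\,\cdot)$ is itself skew-adjoint (Lemma~\ref{lem2}, Theorem~\ref{th4}). If you want to salvage a commutator route you would need a new idea at precisely the point you wave at.

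The contrapositive ``non-skew-adjoint $\Rightarrow$ non-uniqueness'' is half right. Your case (a) is a clean and correct construction: if $A^*u_0=u_0$ then $v=e^tu_0$ is a g.s.\ with exponentially growing norm, distinct from the contraction-semigroup solution. Case (b), however, rests on a false premise. When $\ker(A^*-I)=\{0\}$ and $\ker(A^*+I)\ne\{0\}$, the operator $A$ is already \emph{maximal} skew-symmetric ($\Im(E-A)=L^2$), and by the paper's Theorem~\ref{th6} the contraction semigroup providing g.s.\ is then \emph{unique}: the additional maximal dissipative extensions of $-A$ produced by the Cayley-transform parametrization are not contained in $A^*$, and by Lemma~\ref{lem1} their trajectories are not generalized solutions at all, so they cannot witness non-uniqueness. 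Nor does the eigenvector trick help here, since for $A^*u_0=-u_0$ the decaying solution $e^{-t}u_0$ coincides with the semigroup solution. The paper's Theorem~\ref{th7} handles this case by a quite different and more delicate argument: it runs the backward isometric semigroup generated by $A$ up to time $T$, glues it to a forward g.s., obtains a norm-preserving g.s., and uses the resulting norm identity to upgrade weak to strong convergence of the vanishing-viscosity approximations, whence requirement (R) and Theorem~\ref{th2} give skew-adjointness. You need a replacement for case (b) of roughly this kind.
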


In Theorem~\ref{th5} below we also give a necessary and sufficient condition of uniqueness of contraction semigroups on $L^2(\R^n)$, which provide g.s.

\begin{remark}\label{rem}
Let us consider the Banach space
$$ 
X=D(A^*)=\{ \ u\in L^2 \ | \ A^*u=-\div (au)\in L^2 \ \} 
$$
equipped with the graph norm $\|u\|=\|u\|_2+\|A^*u\|_2$. It is clear that the operator $A$ is skew adjoint id and only if the space $C_0^1(\R^n)$ is dense in $X$. This condition is similar to the criterion of the uniqueness for  both the forward and the backward Cauchy problems, suggested in \cite[Theorem~2.1]{BoCr}.
\end{remark}

\section{The case of smooth coefficients}\label{sec2}

In the case when the coefficients $a_i(x)\in C^1(\R^n)\cap L^\infty(\R^n)$, $i=1,\ldots,n$, are smooth the existence and uniqueness of g.s. is well known.
In this case a g.s. of the problem
(\ref{1}), (\ref{2}) can be found by the method of characteristics, see \cite[Proposition 3]{PaTr} for details. The characteristics of equation (\ref{1}) are integral curves
$(t,x(t))$ of the system of ordinary differential equations
\begin{equation}
\label{10}
\dot x=a(x),
\end{equation}
and they are defined for all $t\in\R$ since the right-hand side of (\ref{10})
is bounded. For $(t_0,x_0)\in\Pi$ we denote
by $x(t;t_0,x_0)$ the solution of (\ref{10}) such that
$x(t_0)=x_0$, we also denote $y(t_0,x_0)=x(0;t_0,x_0)$ (~i.e., the source of characteristic $x(t;t_0,x_0)$~).
Then any g.s. $u(t,x)$ of the problem (\ref{1}), (\ref{2}) should be constant on
characteristics (possibly after correction on a set of null Lebesgue measure), which implies that
$u(t,x)=u_0(y(t,x))$.
We observe that the  map
$(t,x)\to (t,y(t,x))$ is a diffeomorphism on $\Pi$, which implies that
$u(t,x)$ is measurable and the correspondence
$u_0\to u$ keeps the relation of equality almost everythere.
Besides, in view of the solenoidality assumption for each $t\in\R$ the map $x\to y(t,x)$ conserves the Lebesgue measure. This readily implies that for all $t\in\R$
\begin{equation}\label{11}
\int_{\R^n} u(t,x)dx=\int u_0(x)dx
\end{equation}
whenever these integrals exist.
The above observations allow to obtain the following properties of g.s.

\begin{proposition}\label{pro2} Assume that $u(t,x)=u_0(y(t,x))$ be the unique g.s. of problem (\ref{1}), (\ref{2}) (defined for all real times $t$). Then

(i) For every continuous function $g(u)$ such that $g(u_0)\in L^1_{loc}(\R^n)$ the composition $g(u(t,x))$ is a g.s. of (\ref{1}), (\ref{2}) with initial function $g(u_0(x))$ (renormalization property);

(ii) If $u_0\le v_0$ almost everywhere (a.e.) on $\R^n$, and $u=u(t,x)$, $v=v(t,x)$ are g.e.s. of (\ref{1}), (\ref{2}) with initial functions $u_0$, $v_0$, respectively, then $u(t,x)\le v(t,x)$ a.e. on $\R^{n+1}$ (monotonicity);

(iii) Let $T_tu=u(y(t,x))$. Then $T_{t+s}u=T_t(T_su)$ (group property);

(iv) If $u_0(x)\in L^p(\R^n)$, where $1\le p\le\infty$, then $u(t,\cdot)\in L^p(\R^n)$ for all $t\in\R$ and
$\|u(t,\cdot)\|_p=\|u_0\|_p$. Moreover, if $p<\infty$, then
\begin{equation}\label{mod}
\|u(t+h,\cdot)-u(t,\cdot)\|_p\le\omega_p(h)\doteq\inf_{v\in C_0^1(\R^n)}\left(2\|u_0-v\|_p+N C(v) |h|\right)\mathop{\to}_{h\to 0} 0,
\end{equation}
where the constant $C(v)$, given below in (\ref{Cv}), depends only on $v$.
In particular, the map $t\to T_tu_0=u(t,\cdot)\in L^p(\R^n)$ is uniformly continuous on $\R$.
\end{proposition}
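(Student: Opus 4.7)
The plan is to exploit the explicit representation $u(t,x)=u_0(y(t,x))$ together with the measure-preserving character of the map $x\mapsto y(t,x)$, both of which are in hand from the discussion preceding the statement. Since $a\in C^1(\R^n)\cap L^\infty(\R^n)$, the ODE $\dot x=a(x)$ generates a global one-parameter group $\{\phi_\tau\}_{\tau\in\R}$ of $C^1$-diffeomorphisms of $\R^n$, and $y(t,x)=\phi_{-t}(x)$.

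For (i), the hypothesis that the unique g.s.\ with datum $v_0$ is $v_0(y(t,x))$, applied with $v_0=g(u_0)\in L^1_{loc}(\R^n)$, identifies the g.s.\ with datum $g(u_0)$ as $(t,x)\mapsto g(u_0)(y(t,x))=g(u_0(y(t,x)))=g(u(t,x))$. For (ii), the exceptional set $E=\{u_0>v_0\}\subset\R^n$ has Lebesgue measure zero, and for every fixed $t$ the slice $\{x\in\R^n:y(t,x)\in E\}$ is null because $y(t,\cdot)$ preserves Lebesgue measure; Fubini then yields $u(t,x)\le v(t,x)$ a.e.\ on $\R^{n+1}$.

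For (iii), the group law for $\phi_\tau$ gives $y(t+s,x)=\phi_{-t-s}(x)=\phi_{-s}(\phi_{-t}(x))=y(s,y(t,x))$, hence $(T_{t+s}u_0)(x)=u_0(y(s,y(t,x)))=(T_s u_0)(y(t,x))=T_t(T_s u_0)(x)$. The $L^p$-invariance in (iv) follows from the change of variables $x\mapsto y(t,x)$, valid for all $1\le p\le\infty$ because $y(t,\cdot)$ is measure-preserving (the $L^\infty$ case uses the same identity together with the invertibility of $\phi_t$).

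The only genuinely technical step is the uniform continuity estimate. Writing $T_{t+h}u_0-T_tu_0=T_t(T_hu_0-u_0)$ and using the isometry just obtained, it suffices to bound $\|T_hu_0-u_0\|_p$. For any $v\in C_0^1(\R^n)$ the triangle inequality gives
\begin{equation*}
\|T_hu_0-u_0\|_p\le 2\|u_0-v\|_p+\|T_hv-v\|_p,
\end{equation*}
and for the smooth term the elementary bound $|\phi_{-h}(x)-x|\le N|h|$ (from integrating $\dot x=a(x)$) yields $|T_hv(x)-v(x)|\le N|h|\,\|\nabla v\|_\infty$; since $v$ and $T_hv$ are supported in a bounded set depending only on $v$ once $|h|$ is restricted to a fixed bounded window, this gives $\|T_hv-v\|_p\le NC(v)|h|$ with $C(v)$ independent of $h$. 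Taking the infimum over $v$ produces $\omega_p(h)$, and $\omega_p(h)\to 0$ as $h\to 0$ by density of $C_0^1(\R^n)$ in $L^p(\R^n)$ for $p<\infty$. The main obstacle is keeping this estimate clean: one must verify that $C(v)$ is genuinely independent of $h$ (hence the restriction to small $|h|$, harmless for a modulus of continuity) and then extract the $h\to 0$ limit from the two-parameter infimum by a diagonal choice $v=v_h$ with $\|u_0-v_h\|_p\to 0$ slowly enough that $C(v_h)|h|\to 0$.
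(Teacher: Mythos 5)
Your proposal is correct and follows essentially the same route as the paper: the representation $u=u_0(y(t,x))$ plus measure preservation for (i), (ii) and the $L^p$-isometry, the flow group law for (iii), and the triangle-inequality/smooth-approximation argument with $\|T_hv-v\|_p\le NC(v)|h|$ for the modulus of continuity. The only cosmetic difference is in making $C(v)$ independent of $h$: where you restrict $|h|$ to a bounded window to control the support of $T_hv$, the paper instead notes that the set $\{v(y(h,x))\neq 0\}$ has the same Lebesgue measure as $\{v\neq 0\}$ by measure preservation, giving $C(v)=\|\nabla v\|_\infty(2m(A_v))^{1/p}$ uniformly in all $h$.
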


\begin{proof}
Properties (i), (ii) readily follows from the representations $u=u_0(y(t,x))$, $v=v_0(y(t,x))$.
To prove (iii), notice that $y(t+s,x)=x(0;t+s,x)=x(0;s,x(s;t+s,x))=x(0;s,x(0;t,x))=y(s,y(t,x))$, where we used that
$x(t+h;t_0+h,x_0)\equiv x(t;t_0,x_0)$ $\forall h\in\R$ because characteristic system (\ref{10}) is autonomous.
This readily implies the group property
$$
T_{t+s}u(x)=u(y(t+s,x))=u(y(s,y(t,x)))=(T_su)(y(t,x))=T_t(T_su)(x).
$$

If $u_0\in L^\infty(\R^n)$, the representation $u(t,x)=u_0(y(t,x))$ yields $u(t,\cdot)\in L^\infty(\R^n)$, $\|u(t,\cdot)\|_\infty=\|u_0\|_\infty$. If $p<\infty$, then by assertion (i) with $g(u)=|u|^p$ and identity (\ref{11}) we find
$$
\int_{\R^n} |u(t,x)|^pdx=\int_{\R^n}|u_0(x)|^pdx \ \forall t\in\R,
$$
that is, $u(t,\cdot)\in L^p(\R^n)$, $\|u(t,\cdot)\|_p=\|u_0\|_p$.
Finally, let $u_0\in L^p(\R^n)$, $v=u(t,\cdot)=T_tu_0$. Then by group property (iii) we find
$$
\|u(t+h,\cdot)-u(t,\cdot)\|_p=\|T_t(T_hu_0-u_0)\|_p=\|T_hu_0-u_0\|_p=\|u_0(y(h,x))-u_0(x)\|_p.
$$
We observe that $y(h,x)-x=x(0)-x(h)$, where $x(t)=x(t;h,x)$, and since $\dot x(t)=a(x(t))$, then
$$
|y(h,x)-x|=\left|\int_0^h a(x(t))dt\right|\le\left|\int_0^h |a(x(t))|dt\right|\le N|h|,
$$
$N=\|a\|_\infty$. If $v(x)\in C_0^1(\R^n)$, then
\begin{eqnarray}\label{est}
\|T_hv-v\|_p=\|v(y(h,x))-v(x)\|_p=\left(\int_{A_v\cup A_v^h}|v(y(h,x))-v(x)|^pdx\right)^{1/p}\nonumber \\ \le \|\nabla v\|_\infty\left(\int_{A_v\cup A_v^h} |y(h,x)-x|^pdx\right)^{1/p}\le\nonumber\\
\|\nabla v\|_\infty(m(A_v)+m(A_v^h))^{1/p}N|h|,
\end{eqnarray}
where $A_v,A_v^h$ are subsets of $\R^n$, determined by the relations $v(x)\not=0$, $v(y(h,x))\not=0$, respectively, and by $m(A)$ we denote the Lebesgue measure of a measurable set $A$. Since the map $y(h,\cdot)$ keeps the Lebesgue measure,
$m(A_v^h)=m(y(h,\cdot)^{-1}(A_v))=m(A_v)$ and, in view of (\ref{est}),
$$
\|T_hv-v\|_p\le NC(v)|h|,
$$
where
\begin{equation}\label{Cv}
C(v)=C(v)\|\nabla v\|_\infty(2m(A_v))^{1/p}
\end{equation}
 (notice that, in view of assumption $v\in C_0^1(\R^n)$, the set $A_v$ is bounded and, therefore, $m(A_v)<\infty$).
Therefore, for all $v\in C_0^1(\R^n)$
\begin{eqnarray*}
\|T_hu_0-u_0\|_p\le\|T_hu_0-T_hv\|_p+\|T_hv-v\|_p+ \|v-u_0\|_p =\\ \|T_h v-v\|_p+ 2\|u_0-v\|_p\le 2\|u_0-v\|_p+NC(v)|h|,
\end{eqnarray*}
and (\ref{mod}) follows. Let us show that $\omega_p(h)\to 0$ as $h\to 0$.
For arbitrary $\varepsilon>0$ we can find $v\in C_0^1(\R^n)$ such that $\|u_0-v\|_p\le\varepsilon/2$. Then
$$
\omega_p(h)\le 2\|u_0-v\|_p+N C(v)|h|\le\varepsilon+NC(v)|h|.
$$
Hence,
$$
\limsup_{h\to 0}\omega_p(h)\le\varepsilon
$$
and since $\varepsilon>0$ is arbitrary, we derive that $\lim\limits_{h\to 0}\omega_p(h)=0$.
This completes the proof.
\end{proof}

As follows from assertions (iii), (iv) of Proposition~\ref{pro2}, the linear operators $T_tu_0=u(t,\cdot)=u_0(y(t,x))$ generate the $C_0$-group of linear isomorphisms on $L^p(\R^n)$. In the particular case $p=2$ the operators $T_t$, $t\in\R$ is a group of unitary operators in the Hilbert space $L^2(\R^n)$. Let $\displaystyle Bu=\lim_{t\to 0}\frac{T(t)u-u}{t}$ be the infinitesimal generator of this group. This operator is defined in the domain
$D(B)$ consisting on such $u\in L^2(\R^n)$ that $\displaystyle\lim_{t\to 0}\frac{T(t)u-u}{t}$ exists in $L^2$.
It is known that $D(B)$ is a dense subspace and $B$ is a closed, possibly unbounded, operator. Since $T(t)$ is an unitary group, then by Stone's theorem $B$ is a skew-adjoint operator.
If $u(t,x)=T_tu(x)$, then $u_t=-\div au$ in $\D'(\R^{n+1})$. Hence,
it is natural to expect that $B=-A$, where the operator $A$ was defined above, in the end of Introduction.

\begin{theorem}\label{th1}
The operator $B$ coincides with $-A$. In particular, the operator $A=-B$ is skew-adjoint.
\end{theorem}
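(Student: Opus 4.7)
The plan is to establish $B = -A$ by two inclusions: $-A \subset B$ via a direct calculation along characteristics (available because $a$ is now $C^1$), and the reverse inclusion by invoking the standard semigroup-theoretic fact that a dense, $T_t$-invariant subspace contained in $D(B)$ is a core for the generator $B$.

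For the forward inclusion I fix $u \in C_0^1(\R^n) = D(A_0)$. Since $a \in C^1$, the autonomous system~(\ref{10}) generates a $C^1$ flow $\phi_t$, with $y(t,x) = \phi_{-t}(x)$, so $T_t u(x) = u(\phi_{-t}(x))$. The chain rule yields the pointwise identity
\[
T_t u(x) - u(x) = -\int_0^t T_s(A_0 u)(x)\,ds.
\]
As $A_0 u = a \cdot \nabla u$ is bounded with compact support and $s \mapsto T_s(A_0 u)$ is continuous from $\R$ into $L^2$ by Proposition~\ref{pro2}(iv), this equality lifts to $L^2$. Dividing by $t$ and letting $t \to 0$, strong continuity of the group at $s = 0$ gives $(T_t u - u)/t \to -A_0 u$ in $L^2$, so $u \in D(B)$ with $Bu = -A_0 u$. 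Hence $-A_0 \subset B$, and closedness of $B$ upgrades this to $-A \subset B$.

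For the reverse inclusion I note that $C_0^1(\R^n)$ is $T_t$-invariant: $T_t u = u \circ \phi_{-t}$ and the $C^1$ diffeomorphism $\phi_{-t}$ sends compact sets to compact sets, so $T_t u \in C_0^1$ whenever $u \in C_0^1$. Combined with density of $C_0^1$ in $L^2$ and the inclusion $C_0^1 \subset D(B)$ proved in the previous step, the standard core criterion for generators of $C_0$-semigroups implies that $C_0^1$ is a core for $B$. Consequently
\[
B = \overline{B|_{C_0^1}} = \overline{-A_0} = -A,
\]
and in particular $A = -B$ is skew-adjoint by Stone's theorem.

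The genuinely analytic content sits entirely in Step~1, in the passage from the pointwise identity to $L^2$ convergence; that passage rests solely on the strong continuity of $T_s$ on $L^2$, already guaranteed by assertion~(iv) of Proposition~\ref{pro2}. I expect no other serious obstacle, since Step~2 is a direct application of a standard semigroup result once $T_t$-invariance of $C_0^1$ is verified.
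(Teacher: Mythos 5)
Your proof is correct, but the second half takes a genuinely different route from the paper's. The forward inclusion $-A_0\subset B$ (hence $-A\subset B$ by closedness) is the same in both arguments. For the reverse inclusion, the paper does not use a core argument at all: it observes that $-A\subset B$ forces $B=-B^*\subset A^*$ and then proves $B=A^*$ by showing that any $u\in D(A^*)$ with $v=(E+B)^{-1}(u+A^*u)$ satisfies $w=u-v=0$; the vanishing of $w$, a distributional solution of $w-\div(aw)=0$, is obtained from the DiPerna--Lions renormalization lemma \cite[Lemma II.1]{DiL}, which yields $2w^2=\div(aw^2)$ and hence $\|w\|_2=0$ after testing with a dilated cutoff. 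From $B=A^*$ one gets $B=-B^*=-A^{**}=-A$. You instead exploit that the $C^1$ flow $\phi_t$ of (\ref{10}) preserves $C_0^1(\R^n)$, so that $C_0^1$ is a dense, $T_t$-invariant subspace of $D(B)$ and therefore a core, giving $B=\overline{-A_0}=-A$ directly. Both arguments are sound in the smooth setting; yours is more elementary and self-contained (the invariance of $C_0^1$ under the flow does require $a\in C^1$, but that is exactly the standing hypothesis of this section, and the paper itself uses the diffeomorphism property of $(t,x)\mapsto(t,y(t,x))$). What the paper's route buys is a mechanism --- identifying $B$ with $A^*$ via uniqueness for the resolvent equation, proved by renormalization --- that does not depend on the flow preserving test functions and foreshadows the structure of the later sections, where the same skew-adjointness question is attacked for merely bounded measurable $a$ and no smooth flow is available.
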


\begin{proof}
First, we remark that $-A_0\subset B$. Indeed, if $u(x)\in C_0^1(\R^n)=D(A_0)$, then $u(t,x)=T_tu(x)\in C^1(\R^{n+1})$ is a classic solution of (\ref{1}). Therefore,
$$
\lim_{t\to 0}\frac{T_tu(x)-u(x)}{t}=u_t(0,x)=-a(x)\cdot\nabla u(x)=-A_0u(x).
$$
Obviously, this limit is uniform with respect to $x\in\R^n$, which implies that
$$
\lim_{t\to 0}\frac{T_tu-u}{t}=-A_0u \ \mbox{ in } L^2.
$$
Hence, $u\in D(B)$ and $Bu=-A_0u$. Since $B$ is closed, then also $-A\subset B$ (recall that $A$ is the closure of  operator $A_0$~). In particular, $B=-B^*\subset A^*$. We will show that actually $B=A^*$. Let $u\in D(A^*)$.
Then $f=u+A^*u\in L^2$. Since $B$ is skew-adjoint, the operator $E+B$ is invertible and $(E+B)^{-1}$ is a bounded operator on $L^2$. Let $v=(E+B)^{-1}f\in D(B)$. Then $v+Bv=v+A^*v=f=u+A^*u$, and the function $w=u-v$ satisfies the relation $w-\div aw=0$ in $\D'(\R^n)$. As follows from DiPerna-Lions renormalization lemma \cite[Lemma II.1]{DiL}, $2w^2=2w\div aw=\div aw^2$ in $\D'(\R^n)$. Applying this relation to the test function $\rho(\varepsilon x)$, where $\rho(y)\in C_0^1(\R^n)$, $\rho(y)\ge 0$, $\rho(0)=1$, and $\varepsilon>0$, we arrive at the equality
$$
2\int_{\R^n}w^2\rho(\varepsilon x)dx=-\varepsilon\int_{\R^n}w^2a(x)\cdot\nabla_y\rho(\varepsilon x)dx.
$$
Passing in this equality to the limit as $\varepsilon\to 0$, we deduce that $\|w\|_2=0$. Hence, $u=v\in D(B)$.
We have proven that $D(A^*)=D(B)$. This means that $B=A^*$. This, in turn, implies $B=-B^*=-A^{**}=-A$. The proof is complete.
\end{proof}

\section{Main result: the necessity}\label{sec3}
Now we consider the case of general solenoidal field of coefficients $a=a(x)\in L^\infty(\R^n,\R^n)$.
Let
$$
\gamma_\nu(\xi)=\nu^{n}\prod_{i=1}^n\beta(\nu\xi_i)
$$
be a sequence of averaging kernels (approximate unity), where $\xi\in\R^n$, $\nu\in\N$, and
the function $\beta(s)$ was defined above in the proof of Proposition~\ref{pro1}.
Introduce sequences of averaged coefficients, setting for $x\in\R^n$
\begin{eqnarray*}
a_\nu(x)=(a_{1\nu}(x),\ldots,a_{n\nu}(x))=
a*\gamma_\nu(x)= \int_{\R^n} a(x-\xi)
\gamma_\nu(\xi)d\xi.
\end{eqnarray*}
By the known property of averaging functions, $a_\nu\in C^\infty(\R^n,\R^n)\cap L^\infty(\R^n,\R^n)$, $\|a_\nu\|_\infty\le\|a\|_\infty\doteq N$, and
$\div a_\nu(x)=(\div a)*\gamma_\nu(x)=0$. As was demonstrated in the previous section, there exists a unique g.s.
$u=u_\nu(t,x)$ of the Cauchy problem for the regularized equation
\begin{equation}\label{13}
u_t+a_\nu(x)\cdot\nabla_x u=u_t+\div(a_\nu u)=0
\end{equation}
with initial condition (\ref{2}), which may be considered for all time $t\in\R$. By the renormalization property (i) for any $r\ge 0$ the function
$(|u_\nu(t,x)|-r)^+=\max(|u_\nu(t,x)|-r,0)$ is a g.s. of (\ref{13}), (\ref{2}) with initial function
$(|u_0(x)|-r)^+$. By Proposition~\ref{pro1} we have the estimate:
$$
\int_{|x|<R} (|u_\nu(t,x)|-r)^+dx\le\int_{|x|<R+Nt}(|u_0(x)|-r)^+dx\mathop{\to}_{r\to+\infty} 0.
$$
By Danford-Pettis criterion, this estimate implies weak compactness of the sequence  $u_\nu(t,x)$ in $L^1_{loc}(\bar\Pi)$. Therefore, there exists a subsequence $u_k=u_{\nu_k}(t,x)$, $k\in\N$, with $\nu_k\to\infty$ as $k\to\infty$ such that $u_k\mathop{\rightharpoonup}\limits_{k\to\infty} u=u(t,x)$ weakly in $L^1_{loc}(\bar\Pi)$.
Since the sequence $a_k(x)\doteq a_{\nu_k}(x)\to a(x)$ as $k\to\infty$ strongly in $L^1_{loc}(\R^n,\R^n)$ and this
sequence is uniformly bounded, then $u_k(t,x)a_k(x)\mathop{\rightharpoonup}\limits_{k\to\infty} u(t,x)a(x)$ weakly in $L^1_{loc}(\bar\Pi,\R^n)$. This allows to pass to the limit as $k\to\infty$ in relation (\ref{3}) corresponding to problem (\ref{13}), (\ref{2}):
$$
\int_\Pi [u_kf_t+u_ka_k\cdot\nabla_x f]dtdx+\int_{\R^n} u_0(x)f(0,x)dx=0 \quad \forall f=f(t,x)\in C_0^1(\bar\Pi)
$$
and obtain that
$$
\int_\Pi [uf_t+ua\cdot\nabla_x f]dtdx+\int_{\R^n} u_0(x)f(0,x)dx=0 \quad \forall f=f(t,x)\in C_0^1(\bar\Pi).
$$
By Definition~\ref{def1}, this means that $u$ is a g.s. of original problem (\ref{1}), (\ref{2}).
We established the existence of a g.s. to (\ref{1}), (\ref{2}) for arbitrary initial function $u_0\in L^1_{loc}(\R^n)$ (~in the case $u_0\in L^\infty(\R^n)$ this follows from \cite[Theorem~1]{PaTr}~). Concerning the uniqueness, generally it fails, see examples in \cite{Br,CLR,PaTr}. It is clear, that the uniqueness follows from
the renormalization property. Indeed, let $u(t,x)\in L^1_{loc}(\bar\Pi)$ be a g.s. of (\ref{1}), (\ref{2}) with zero initial data. Then $|u(t,x)|$ be a nonnegative g.s. of the same problem. By Proposition~\ref{pro1} we see that for a.e. $t>0$
$$
\int_{|x|<R} |u(t,x)|dx\le\int_{|x|<R+Nt}|u_0(x)|dx=0 \quad \forall R>0,
$$
which implies that $u=0$ a.e. on $\Pi$. By the linearity the uniqueness follows.

Suppose that the following requirement is fulfilled.

\begin{itemize}
\item[(R)]
Any g.s. $u(t,x)$ of (\ref{1}), (\ref{2}) such that $u_0,u(t,\cdot)\in L^2$, $\|u(t,\cdot)\|_2\le\const$, satisfies the renormalization property.
\end{itemize}

As we will demonstrate below in this case g.s. of (\ref{1}), (\ref{2}) form the $C_0$-semigroup $T_t=e^{-At}$ governed by a skew-adjoint generator $A=-A^*$. First, we prove that trajectories $T_tu_0$ of such semigroups are necessary g.s. of (\ref{1}), (\ref{2}). More precisely, the following criterion holds.

\begin{lemma}\label{lem1}
Let $B$ be an infinitesimal generator of $C_0$-semigroup $T_t$ in $L^2$. Then
the function $u(t,x)=T_tu_0(x)$ is a g.s. of problem (\ref{1}), (\ref{2}) for every $u_0\in L^2$ if and only if $B\subset A^*$.
\end{lemma}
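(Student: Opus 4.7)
The plan is to split into sufficiency and necessity, exploiting that for $u_0\in D(B)$ the trajectory $t\mapsto T_tu_0$ is $C^1$ from $[0,\infty)$ into $L^2$ with derivative $BT_tu_0$, so the distributional identity defining a g.s.\ reduces to a classical integration by parts in~$t$. The bridge between the two formulations is the identity
\[
(A^*w,\psi)_2=(w,a\cdot\nabla\psi)_2\qquad\forall\,w\in D(A^*),\ \psi\in C_0^\infty(\R^n),
\]
which is immediate from the definition $A^*w=-\div(aw)$ and the distributional pairing.

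For sufficiency, assume $B\subset A^*$. First I would verify (\ref{3}) for $u_0\in D(B)$: fix $f\in C_0^\infty(\bar\Pi)$, set $g(t):=(T_tu_0,f(t,\cdot))_2$, and note $g$ is $C^1$ on $[0,\infty)$ with $g'(t)=(BT_tu_0,f(t,\cdot))_2+(T_tu_0,f_t(t,\cdot))_2$ by the Hilbert-space product rule. Since $B\subset A^*$, the bridge identity applied to $w=T_tu_0$ gives $(BT_tu_0,f(t,\cdot))_2=(T_tu_0,a\cdot\nabla_xf(t,\cdot))_2$; integrating $g'$ on $[0,\infty)$, with $g(0)=(u_0,f(0,\cdot))_2$ and the upper boundary term vanishing by the compact $t$-support of $f$, yields exactly (\ref{3}). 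To extend to arbitrary $u_0\in L^2$ I would approximate by $u_0^{(k)}\in D(B)$, use that $\|T_t\|$ is uniformly bounded on the compact $t$-support of $f$ (so $\|T_tu_0^{(k)}-T_tu_0\|_2\to 0$ uniformly in $t$), and pass to the limit term by term in (\ref{3}).

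For necessity, assume $T_tu_0$ is a g.s.\ for every $u_0\in L^2$. Fix $u_0\in D(B)$ and plug into (\ref{3}) test functions $f(t,x)=\phi(t)\psi(x)$ with $\phi\in C_0^\infty((0,\infty))$ and $\psi\in C_0^\infty(\R^n)$; the initial-data term vanishes and one obtains
\[
\int_0^\infty\bigl[\phi'(t)(T_tu_0,\psi)_2+\phi(t)(T_tu_0,a\cdot\nabla\psi)_2\bigr]\,dt=0.
\]
Since $t\mapsto(T_tu_0,\psi)_2$ is $C^1$ with derivative $(BT_tu_0,\psi)_2$, integrating the first term by parts and invoking the arbitrariness of $\phi$ forces $(BT_tu_0,\psi)_2=(T_tu_0,a\cdot\nabla\psi)_2$ for every $t>0$; sending $t\to 0^+$ via strong continuity of $T_t$ and continuity of $t\mapsto T_tBu_0=BT_tu_0$ gives $(Bu_0,\psi)_2=(u_0,a\cdot\nabla\psi)_2$ for every $\psi\in C_0^\infty(\R^n)$. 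This is precisely the distributional identity $-\div(au_0)=Bu_0$; since $Bu_0\in L^2$, it places $u_0\in D(A^*)$ with $A^*u_0=Bu_0$, i.e.\ $B\subset A^*$.

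The main obstacle is not conceptual but a matter of careful bookkeeping: one must justify differentiation of the pairing $(T_tu_0,f(t,\cdot))_2$ with a time-dependent test function, and the passage $t\to 0^+$ in the necessity argument (which uses only the strong continuity of $T_t$ and $BT_t u_0 = T_t B u_0$ at $0$). Once these are in hand, the bridge identity $(A^*w,\psi)_2=(w,a\cdot\nabla\psi)_2$ is exactly what converts the $L^2$-generator equation $\frac{d}{dt}T_tu_0=BT_tu_0$ into the distributional transport equation $u_t+\div(au)=0$ tested against $f\in C_0^\infty(\bar\Pi)$.
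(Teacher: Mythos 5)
Your proof is correct and follows essentially the same route as the paper: both directions rest on the $C^1$-regularity of $t\mapsto T_tu_0$ for $u_0\in D(B)$, the distributional characterization of $A^*$ as $-\div(au)$, integration by parts in $t$, and density of $D(B)$ for the extension to general $u_0$. The only (harmless) differences are that you differentiate the pairing with a general time-dependent test function directly instead of reducing to separated products $h(t)g(x)$, and in the necessity part you use time cutoffs supported in $(0,\infty)$ followed by a limit $t\to 0^+$ where the paper uses the mollified Heaviside functions $\theta_\nu(t_0-t)$.
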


\begin{proof}
First, we assume that $B\subset A^*$ and $u_0\in D(B)$. Then $u(t,\cdot)=T_tu_0(x)$ is a $C^1$-function with values in $L^2$: $\dot u=BT_tu_0=Bu(t,\cdot)$. This implies that for arbitrary $g=g(x)\in C_0^1(\R^n)$
$$
\frac{d}{dt}(u(t,\cdot),g)_2=(Bu(t,\cdot),g)_2=(A^*u(t,\cdot),g)_2=(u(t,\cdot),Ag)_2,
$$
where $Ag=\div ag=a\cdot\nabla g$, that is,
$$
\frac{d}{dt}\int_{R^n}u(t,x)g(x)dx-\int_{\R^n}u(t,x)a(x)\cdot\nabla_xg(x)dx=0.
$$
Multiplying this relation by a function $h(t)\in C_0^1([0,+\infty))$ and integrating over $t$, we obtain with the help of integration by part formula that
\begin{equation}\label{l1}
\int_{\R^n} u_0(x)f(0,x)dx+\int_{\Pi}u[f_t+a\cdot\nabla_x f]dtdx=0,
\end{equation}
where $f=g(x)h(t)$. Since the linear span of such functions $f$ is dense in $C_0^1(\bar\Pi)$, we see that (\ref{l1}) holds for every $f=f(t,x)\in C_0^1(\bar\Pi)$. Hence, $u(t,x)$ is a g.s. of (\ref{1}), (\ref{2}). If $u_0(x)\in L^2$ is an arbitrary function, then we can find a sequence $u_{0k}\in D(B)$ converging to $u_0$ as $k\to\infty$ in $L^2$
(~notice that by the Hille-Yosida theorem $D(B)$ is dense in $L^2$~). Then $u_k(t,x)=T_tu_{0k}(x)$ are g.s. of (\ref{1}), (\ref{2}) with initial data $u_{0k}$, $k\in\N$, and
$$
\|u_k(t,\cdot)-u(t,\cdot)\|_2\le\|T_t\||u_{0k}-u_0\|_2\mathop{\to}_{k\to\infty} 0
$$
uniformly in $t$ on any segment $[0,T]$. In particular $u_k\to u$ as $k\to\infty$ in $L^1_{loc}(\bar\Pi)$.
Passing to the limit as $k\to\infty$ in the relation
$$
\int_{\R^n} u_{0k}(x)f(0,x)dx+\int_{\Pi}u_k[f_t+a\cdot\nabla_x f]dtdx=0, \quad f=f(t,x)\in C_0^1(\bar\Pi),
$$
we arrive at the identity (\ref{l1}). Therefore, $u(t,x)$ is a g.s. of (\ref{1}), (\ref{2}), as was to be proved.

Conversely, assume that all the functions $u(t,x)=T_tu_0$, $u_0\in L^2$, are g.s. of (\ref{1}), (\ref{2}). If $u_0\in D(B)$, then $u(t,\cdot)=T_tu_0\in C^1([0,+\infty),L^2)$, and $u'(0)=Bu_0$. This implies that for each function $g(x)\in C_0^1(\R^n)$
the scalar function
\begin{equation}\label{l4}
I(t)=\int_{\R^n}u(t,x)g(x)dx=(u(t,\cdot),g)_2\in C^1([0,+\infty)), \ I'(0)=(g,Bu_0)_2.
\end{equation}
On the other hand for all $h(t)\in C_0^1([0,+\infty))$
\begin{eqnarray*}
\int_0^{+\infty}I(t)h'(t)dt=\int_{\Pi} u(t,x)g(x)h'(t)dtdx=\\
-h(0)\int_{\R^n} u_0(x)g(x)dx-\int_{\Pi} u(t,x) a(x)\cdot\nabla g(x) h(t)dxdt,
\end{eqnarray*}
by virtue of (\ref{3}) with $f=h(t)g(x)$. Taking in this relation $h(t)=\theta_\nu(t_0-t)$ and passing to the limit as $\nu\to\infty$ we obtain the equality
$$
I(t_0)-I(0)=\int_0^{t_0}\int_{\R^n}u(t,x) a(x)\cdot\nabla g(x)dxdt=\int_0^{t_0}(Ag,u(t,\cdot))_2dt,
$$
which implies the relation $I'(0)=(Ag,u_0)_2$. In view of (\ref{l4}) we find $(Ag,u_0)_2=(g,Bu_0)_2$ for all $g\in C_0^1(\R^n)$. Therefore, $u_0\in D(A^*)$ and $A^*u_0=Bu_0$. Hence $B\subset A^*$. The proof is complete.
\end{proof}

Now we are ready to prove the following statement analogous to Theorem~\ref{th1} (~that is, the necessity statement in Theorem~\ref{main}~).

\begin{theorem}\label{th2} Suppose that assumption (R) is satisfied.
Then the operator $A$ (~recall that it is the closure of operator $\div au$, $u\in C_0^1(\R^n)$~) is skew-adjoint.
\end{theorem}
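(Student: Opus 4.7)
The plan is to exploit the standard criterion that a closed skew-symmetric operator $A$ on a Hilbert space is skew-adjoint if and only if both of its deficiency subspaces vanish, that is, $\ker(I-A^*) = \ker(I+A^*) = \{0\}$. Using the identification $A^*u = -\div(au)$ established at the end of Section~\ref{sec1}, this reduces to showing that any $u \in L^2(\R^n)$ satisfying $\div(au) = \pm u$ in $\D'(\R^n)$ must vanish.

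For $u \in \ker(I+A^*)$ (so $\div(au) = u$), I would introduce the ansatz $v(t,x) = e^{-t}u(x)$. A direct check gives $v_t + \div(av) = -e^{-t}u + e^{-t}u = 0$ in $\D'(\Pi)$ with initial value $u$, so $v$ is a g.s. of (\ref{1}), (\ref{2}); moreover $\|v(t,\cdot)\|_2 = e^{-t}\|u\|_2$ is uniformly bounded, so assumption (R) applies and $v^2 = e^{-2t}u^2$ is a g.s. with initial datum $u^2$. Testing the identity $(v^2)_t + \div(av^2) = 0$ against separated test functions $\phi(t)\chi(x)$ with $\phi \in C_0^\infty(0,\infty)$ and $\chi \in C_0^1(\R^n)$, and integrating the exponential factor by parts in $t$, extracts the stationary distributional relation
\[
\div(au^2) = 2u^2 \quad \text{in } \D'(\R^n).
\]
Applying this to a rescaled cutoff $\varphi_R(x) = \varphi_0(x/R)$, where $\varphi_0 \in C_0^1(\R^n)$ equals $1$ near the origin, the left-hand side is dominated by $(\|a\|_\infty/R)\|u\|_2^2 \to 0$ as $R \to \infty$, while the right-hand side tends to $2\|u\|_2^2$ by monotone convergence, forcing $u = 0$.

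The main obstacle lies in the symmetric case $u \in \ker(I-A^*)$, where $\div(au) = -u$ and the natural ansatz $v(t,x) = e^t u(x)$ is a g.s. but has $L^2$ norm growing exponentially, so (R) does not apply to it. To bypass this I would fix an arbitrary $T>0$ and patch together two uniformly bounded pieces: set $v(t,x) = e^{t-T}u(x)$ on $[0,T]$ and $v(t,x) = \hat u(t-T,x)$ on $[T,\infty)$, where $\hat u$ is any $L^2$-bounded g.s. with initial datum $u$ supplied by the approximation-weak limit construction recalled at the beginning of Section~\ref{sec3}. Both pieces equal $u$ at $t = T$ in $\D'(\R^n)$, so no jump contributes to $v_t$ and $v$ is a bona fide g.s. on $[0,\infty)$ with initial datum $e^{-T}u$ and $\|v(t,\cdot)\|_2 \le \|u\|_2$. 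Applying (R) to this $v$, restricting the renormalized equation for $v^2$ to the strip $(0,T)\times\R^n$ where $v^2 = e^{2(t-T)}u^2$, and repeating the separation-of-variables and cutoff arguments now yields $\div(au^2) = -2u^2$ in $\D'(\R^n)$, and the same cutoff estimate again forces $u = 0$. Combining both cases gives $\ker(I \pm A^*) = \{0\}$, so $A$ is skew-adjoint.
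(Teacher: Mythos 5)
Your proof is correct, but it follows a genuinely different route from the paper's. The paper proves Theorem~\ref{th2} by upgrading the weak convergence of the approximations $u_\nu=T_t^\nu u_0$ to strong $L^2$ convergence (using (R) to identify the weak limit of $(u_\nu)^2$ with $u^2$), showing the limit operators form a unitary group, invoking Stone's and the Trotter--Kato theorems to get a skew-adjoint generator $B$ with $B\subset A^*$, and finally ruling out $B\neq -A$ by the uniqueness that (R) implies. You instead attack the deficiency subspaces $\ker(I\pm A^*)$ head-on: from a putative deficiency vector $u$ with $\div(au)=\pm u$ you manufacture an explicit exponential-profile g.s.\ with bounded $L^2$ norm (patching with a weak-limit solution in the growing case), apply (R) to its square, separate variables to extract $\div(au^2)=\pm 2u^2$ in $\D'(\R^n)$, and kill $u$ with the rescaled-cutoff estimate — essentially the same cutoff trick the paper uses inside the proof of Theorem~\ref{th1}, but driven by (R) rather than by the DiPerna--Lions lemma. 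Your argument buys a complete bypass of the semigroup/Trotter--Kato machinery and of the extension-theoretic discussion; what it costs is that a few steps deserve explicit justification: (a) that the glued function at $t=T$ satisfies identity (\ref{3}) (this follows by splitting the integral at $t=T$ and using the weak trace (\ref{5}) of each piece, so the boundary terms $\pm\int u\,f(T,\cdot)\,dx$ cancel); (b) that the weak-limit g.s.\ $\hat u$ satisfies $\|\hat u(t,\cdot)\|_2\le\|u\|_2$ for a.e.\ $t$, which needs weak lower semicontinuity of the $L^2(\Pi_T)$ norm plus Lebesgue differentiation in $t$, not just the bound on the approximations; and (c) the limit $\int u^2\varphi_0(x/R)\,dx\to\|u\|_2^2$ is by dominated, not monotone, convergence unless $\varphi_0$ is chosen radially decreasing. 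None of these is a genuine gap.
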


\begin{proof}
Let $A_\nu$ be the closure of operator $\div (a_\nu u)$, where $a_\nu(x)=a*\gamma_\nu(x)$, $\nu\in\N$, is the above defined sequence of averaged coefficients. If $u_0(x)\in L^2(\R^n)$ and $u_\nu=u_\nu(t,x)$ is a unique g.s. of the approximate problem (\ref{13}), (\ref{2}), then $(u_\nu)^2$ is a g.s. of (\ref{13}), (\ref{2}) with initial data
$(u_0)^2\in L^1(\R^n)$ in view of Proposition~\ref{pro2}(i). We know that there exists a subsequence (not relabeled) such that
$u_\nu\rightharpoonup u$, $(u_\nu)^2\rightharpoonup v$ as $\nu\to\infty$ weakly in $L^1_{loc}(\bar\Pi)$, where
$u,v$ are g.s. of original problem (\ref{1}), (\ref{2}) with initial data $u_0$, $(u_0)^2$, respectively.
Observe that since a g.s. of problem (\ref{1}), (\ref{2}) is unique, then the above limit relations remain valid for the original sequences, without extraction of subsequences. By the renormalization property we have $v=u^2$, which implies the strong convergence $u_\nu\mathop{\to}\limits_{\nu\to\infty} u$ in $L^2_{loc}(\bar\Pi)$. Indeed,
in view of Proposition~\ref{pro2}(iv) $\int_0^T |u_\nu(t,x)|^2dtdx=T\|u_0\|_2$, therefore the sequence $u_\nu$ is bounded in $L^2_{loc}(\bar\Pi)$. This readily implies that this sequence converges to $u$ weakly in $L^2_{loc}(\bar\Pi)$. Hence, for each nonnegative $\rho(t,x)\in C_0(\bar\Pi)$
$$
\int_\Pi (u_\nu-u)^2\rho dtdx=\int_\Pi ((u_\nu)^2-u^2)\rho dtdx-2\int_\Pi (u_\nu-u)u\rho dtdx\mathop{\to}_{\nu\to\infty} 0.
$$
Thus, $u_\nu\mathop{\to}\limits_{\nu\to\infty} u$ in $L^2_{loc}(\bar\Pi)$. Extracting a subsequence (not relabeled)
we can assume that for almost all $t>0$ $u_\nu(t,\cdot)\to u(t,\cdot)$ as $\nu\to\infty$ in $L^2_{loc}(\R^n)$.
By estimate (\ref{6a}) we can find sufficiently large $R>NT$ such that for a.e. $t\in (0,T)$
\begin{eqnarray*}
\int_{|x|>R}(u_\nu(t,x))^2dx\le\int_{|x|>R-Nt}(u_0(x))^2dx<\varepsilon/4, \\
\int_{|x|>R}(u(t,x))^2dx\le\int_{|x|>R-Nt}(u_0(x))^2dx<\varepsilon/4,
\end{eqnarray*}
where $\varepsilon$ is an arbitrary positive number. This implies that
\begin{eqnarray*}
\int_{\R^n}(u_\nu(t,x)-u(t,x))^2dx\le\int_{|x|<R}(u_\nu(t,x)-u(t,x))^2dx+\\ \int_{|x|>R}(u_\nu(t,x)-u(t,x))^2dx\le
\int_{|x|<R}(u_\nu(t,x)-u(t,x))^2dx+ \\
2\int_{|x|>R}(u_\nu(t,x))^2dx+2\int_{|x|>R}(u(t,x))^2dx\le \\
\int_{|x|<R}(u_\nu(t,x)-u(t,x))^2dx+ \varepsilon.
\end{eqnarray*}
Since $u_\nu(t,\cdot)\mathop{\to}\limits_{\nu\to\infty} u(t,\cdot)$ in $L^2_{loc}(\R^n)$, we obtain the relation
$$
\limsup_{\nu\to\infty}\int_{\R^n}(u_\nu(t,x)-u(t,x))^2dx\le\varepsilon
$$
for all $\varepsilon>0$. Therefore,
$$
\lim_{\nu\to\infty}\int_{\R^n}(u_\nu(t,x)-u(t,x))^2dx=0,
$$
that is, $u_\nu(t,\cdot)\to u(t,\cdot)$ as $\nu\to\infty$ in $L^2$ for a.e. $t>0$.

Let us show that actually this convergence is uniform with respect to $t$ on any segment $[0,T]$.
For that we use estimate (\ref{mod}) with $p=2$. By this estimate for all $\nu\in\N$
\begin{equation}\label{mod2}
\|u_\nu(t+h,\cdot)-u_\nu(t,\cdot)\|_2\le\omega_2(h)=\inf_{v\in C_0^1(\R^n)}\left(2\|u_0-v\|_2+NC(v)|h|\right).
\end{equation}
Since the above estimate is uniform in $\nu$ and $u_\nu(t,\cdot)\mathop{\to}\limits_{\nu\to\infty} u(t,\cdot)$ in $L^2$ for a.e. $t>0$, we conclude that this convergence holds for all $t>0$ and it is unform on any segment $[0,T]$.
From (\ref{mod2}) it follows in the limit as $\nu\to\infty$ that
$$
\|u(t+h,\cdot)-u(t,\cdot)\|_2\le\omega_2(h) \quad \forall t,t+h\ge 0.
$$
Thus, the operators $T_tu_0=u(t,\cdot)$ form a $C_0$-semigroup of linear operators on $L^2$, and the sequence of the unitary groups $T_t^\nu u_0=u_\nu(t,\cdot)$ converges to $T_t$ uniformly on any segment $[0,T]$.
It is clear that $\|T_tu_0\|_2=\lim\limits_{\nu\to\infty}\|T_t^\nu u_0\|_2=\|u_0\|_2$.
Observe that by the same reasons as above we can establish that for each $\tau>0$ the sequence $\tilde u_\nu(t,\cdot)=T^\nu_{t-\tau}u_0$ converges uniformly on $[0,\tau]$ to a g.s. $\tilde u(t,x)$ of problem (\ref{1}), (\ref{2}) with some initial function $\tilde u_0(x)$. By the construction $T_\tau\tilde u_0=\tilde u(\tau,\cdot)=u_0$. We see that the operator $T_\tau$ is invertible, $\tilde u_0=(T_\tau)^{-1}u_0$. Hence $T_t$ are unitary operators and they form the unitary group $T(t)$ (for negative $t$ we set $T(t)=(T(-t))^{-1}=(T(-t))^*$).
By Stone' theorem the infinitesimal generator $B$ of this group is a skew-adjoint operator on $L^2$. By the Trotter--Kato theorem, the convergence $T_t^\nu\to T_t$ of semigroups, which we have established above, implies the convergence of the resolvents $(E+A_\nu)^{-1}u\to (E-B)^{-1}u$ in $L^2$ as $\nu\to\infty$. Recall that $A_\nu$
is the closure of operator $\div (a_\nu u)$, $u\in C_0^1(\R^n)$. By Theorem~\ref{th1} this operator is skew-adjoint and $-A_\nu$ is the generator of semigroup (group) $T_t^\nu$. Denote $v_\nu=(E+A_\nu)^{-1}u$,
$v=(E-B)^{-1}u$. Then $v_\nu\to v$ as $\nu\to\infty$ in $L^2$ and $v_\nu+A_\nu v_\nu=v-Bv=u$. Therefore,
$A_\nu v_\nu\to -Bv$ as $\nu\to\infty$ in $L^2$. Since $A_\nu=-(A_\nu)^*$, we claim that in $\D'(\R^n)$
$A_\nu v_\nu=\div(a_\nu(x)v_\nu(x))\mathop{\to}\limits_{\nu\to\infty} -Bv$. Passing to the limit as $\nu\to\infty$,
we obtain $Bv=-\div(av)$, that is, $v\in D(A^*)$, $Bv=A^*v$. Hence, $B\subset A^*$ and $A=A^{**}\subset B^*=-B$, so that $B$ is a skew-adjoint extension of the skew-symmetric operator $-A$. If $B\not=-A$ then this extension  cannot be unique (because the deficiency indices of the symmetric operator $-iA$ are identical and nonzero).
If $\tilde B$ is another skew-adjoint extension of $-A$ then $\tilde B=-\tilde B^*\subset A^*$. The operator $\tilde B$ generates the unitary group $\tilde T_t=e^{\tilde Bt}$ different of $T_t$ (since $\tilde B\not=B$). Therefore, we can find $u_0\in L^2$ such that $\tilde u(t,x)=\tilde T(t)u_0(x)\not\equiv u(t,x)=T_tu_0(x)$. However, in view of Lemma~\ref{lem1} both functions $\tilde u(t,x)$, $u(t,x)$ are g.s. of the same Cauchy problem (\ref{1}), (\ref{2}). By the uniqueness we see that $\tilde u\equiv u$. The obtained contradiction shows that $B=-A$.
Hence, the operator $A=-B$ is skew-adjoint, as was to be proved.
\end{proof}

\section{The group solutions}\label{sec4}

We are going to establish the inverse statement to Theorem~\ref{th2} claiming that if the operator $A$ is skew-adjoint, then any g.s. $u(t,x)\in L^2_{loc}(\bar\Pi)$ of problem (\ref{1}), (\ref{2}) satisfies the renormalization property.

Observe that in this case $T_t=e^{-At}$ is an unitary $C_0$-group on $L^2$ governed by the skew-adjoint operator $-A$. We call a function $u(t,x)=T_tu_0(x)$
\textit{a group solution} of problem (\ref{1}), (\ref{2}). By Lemma~\ref{lem1} the group solution is a g.s. of (\ref{1}), (\ref{2}).
First, we establish that the approximate sequence $u_\nu=T_t^\nu u_0(x)$ converges strongly as $\nu\to\infty$
to the group solution.

\begin{proposition}\label{pro3}
Let $T_t^\nu =e^{-A_\nu t}$ be the group with generator $-A_\nu$ (~being the closure of operator $-\div (a_\nu u)$~), so that $T_t^\nu u_0=u_\nu(t,x)$
is the unique g.s. of approximate problem (\ref{13}), (\ref{2}). Then $u_\nu(t,\cdot)\to u(t,\cdot)=T_tu_0$ as $\nu\to\infty$ in $L^2$ uniformly on any segment $|t|\le T$.
\end{proposition}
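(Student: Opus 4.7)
The plan is to invoke the Trotter--Kato approximation theorem, mirroring the tail end of the proof of Theorem~\ref{th2}. Both $T_t$ and $T_t^\nu$ are unitary $C_0$-groups (by the standing hypothesis on $A$ in this section, and by Theorem~\ref{th1} respectively), so they are uniformly contractive, and convergence of the groups reduces to strong resolvent convergence in $L^2$: it suffices to show $(E+A_\nu)^{-1}u\to (E+A)^{-1}u$ in $L^2$ for every $u\in L^2$.

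To establish this, I would fix $u\in L^2$ and set $v_\nu=(E+A_\nu)^{-1}u$, so that $v_\nu+\div(a_\nu v_\nu)=u$ in $\D'(\R^n)$. Skew-adjointness of $A_\nu$ gives the a priori bound $\|v_\nu\|_2\le\|u\|_2$, so a subsequence converges weakly to some $v\in L^2$. The first substantive step is to pass to the limit in the equation: for $\varphi\in C_0^\infty(\R^n)$ the fields $a_\nu\cdot\nabla\varphi$ have a fixed compact support, are uniformly bounded, and converge a.e.\ to $a\cdot\nabla\varphi$, hence they converge strongly in $L^2$ by dominated convergence; weak times strong pairing then yields $\div(a_\nu v_\nu)\to\div(av)$ in $\D'$. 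The limit equation $v+\div(av)=u$ means $v\in D(A^*)=D(A)$ and $(E+A)v=u$, which identifies $v=(E+A)^{-1}u$ uniquely and upgrades subsequential weak convergence to weak convergence of the whole sequence.

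The main obstacle is promoting weak to strong convergence of the resolvents. The trick is the isometry-type identity available because the approximants are skew-symmetric: taking the $L^2$-inner product of $v_\nu+A_\nu v_\nu=u$ with $v_\nu$ annihilates the cross term $(A_\nu v_\nu,v_\nu)_2=0$ and leaves $\|v_\nu\|_2^2=(u,v_\nu)_2$. Passing to the limit gives $\|v_\nu\|_2^2\to(u,v)_2=\|v\|_2^2$, where the last equality is the same identity applied to the skew-adjoint operator $A$. In a Hilbert space, weak convergence plus norm convergence forces strong convergence, so $v_\nu\to v$ in $L^2$.

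With strong resolvent convergence in hand, Trotter--Kato delivers $T_t^\nu u_0\to T_tu_0$ in $L^2$ uniformly on $0\le t\le T$. To cover negative times, I would apply exactly the same argument to the skew-adjoint generators $-A_\nu$ and $-A$ governing the adjoint groups $T_{-t}^\nu=(T_t^\nu)^*$ and $T_{-t}=T_t^*$, then concatenate to obtain uniform convergence on $|t|\le T$. The only place where care is required is the distributional limit in $\div(a_\nu v_\nu)$, but localisation via test functions together with the strong $L^2_{loc}$ convergence $a_\nu\to a$ makes this routine.
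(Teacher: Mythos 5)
Your proposal is correct and follows essentially the same route as the paper: an a priori bound from skew-symmetry, weak subsequential convergence of the resolvents, passage to the limit in the distributional equation to identify the limit as $(E+A)^{-1}u$, the identity $\|v_\nu\|_2^2=(u,v_\nu)_2$ to upgrade weak to strong convergence, and then Trotter--Kato. The only cosmetic difference is that the paper treats $(E+hA_\nu)^{-1}$ for all $h\neq 0$ in one stroke, whereas you take $h=1$ and then repeat the argument with $-A_\nu$, $-A$ to cover negative times; these are equivalent.
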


\begin{proof}
Assume that $f\in L^2$, $h\not=0$. We set $v_\nu=(E+hA_\nu)^{-1}f\in D(A_\nu)$, $\nu\in\N$; $v=(E+hA)^{-1}f\in D(A)$. Then $v_\nu+hA_\nu v_\nu=f$, $v+hA v=f$. Since $A_\nu=-(A_\nu)^*$, $A=-A^*$, these equalities mean that
\begin{equation}\label{14}
v_\nu(x)+h\div (a_\nu(x) v_\nu(x))=v(x)+h\div (a(x) v(x))=f(x) \ \mbox{ in } \D'(\R^n).
\end{equation}
Since $A_\nu$, $A$ are skew-symmetric,
\begin{eqnarray}\label{15}
\|v_\nu\|_2^2=(f,v_\nu)_2+h(A_\nu v_\nu,v_\nu)_2=(f,v_\nu)_2, \
\|v\|_2^2=\nonumber\\ (f,v)_2+h(Av,v)_2=(f,v)_2.
\end{eqnarray}
From (\ref{15}) it follows  that $\|v_\nu\|_2\le\|f\|_2$ for all $\nu\in\N$. Therefore, possibly after extraction of a subsequence (not relabeled), we can assume that $v_\nu\rightharpoonup w$ as $\nu\to\infty$ weakly in $L^2$, $w=w(x)\in L^2$. Passing to the limit as $\nu\to\infty$ in (\ref{14}) and taking into account that the sequence $a_\nu(x)\mathop{\to}\limits_{\nu\to\infty} a(x)$ in $L^1_{loc}(\R^n)$ and uniformly bounded, we find
$w(x)+h\div (a(x) w(x))=f(x)$ in $\D'(\R^n)$, which means $w+hAw=f$. Hence $v-w+hA(v-w)=0$ and we conclude that
$w=v$ because the operator $E+hA$ is invertible. Thus, $v_\nu\rightharpoonup v$ as $\nu\to\infty$ weakly in $L^2$.
Then $(f,v_\nu)_2\mathop{\to}\limits_{\nu\to\infty} (f,v)_2$ and from (\ref{15}) it follows that
$\|v_\nu\|_2\mathop{\to}\limits_{\nu\to\infty} \|v\|_2$. It is well-known that this implies the strong convergence
$v_\nu\mathop{\to}\limits_{\nu\to\infty} v$ in $L^2$. Notice that the limit function $v$ does not depend on the choice of weakly convergent subsequence. Therefore, the original sequence converges to the same limit strongly in $L^2$. We have established the strong convergence of resolvents $(E+hA_\nu)^{-1}\to (E+hA)^{-1}$. By the Trotter--Kato theorem the sequence of groups $T_t^\nu$ converges to the group $T_t$  in the sense indicated in the formulation of our theorem. The proof is complete.
\end{proof}

\begin{corollary}\label{cor1}
Let $u_0\in L^2(\R^n)$. Then $u(t,x)=T_tu_0(x)$ is a renormalized solution of (\ref{1}), (\ref{2}).
\end{corollary}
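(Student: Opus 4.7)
The plan is to transfer the renormalization property, already known for the smooth approximations $u_\nu=T_t^\nu u_0$ by Proposition~\ref{pro2}(i), to the limit $u=T_tu_0$ via the strong convergence supplied by Proposition~\ref{pro3} together with an equi-integrability argument based on Proposition~\ref{pro1}. Fix a continuous $g$ with $g(u_0)\in L^1_{loc}(\R^n)$. The goal is to show that $g(u)$ is a g.s. of (\ref{1}), (\ref{2}) with initial data $g(u_0)$.

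First, Proposition~\ref{pro3} gives $u_\nu(t,\cdot)\to u(t,\cdot)$ in $L^2$ uniformly on any $[0,T]$, so $u_\nu\to u$ in $L^2_{loc}(\bar\Pi)$; passing to a subsequence (not relabeled) yields $u_\nu\to u$ a.e. on $\bar\Pi$, hence $g(u_\nu)\to g(u)$ a.e. by continuity of $g$. Second, by Proposition~\ref{pro2}(i) applied to the smooth-coefficient equation (\ref{13}), each function $(|g(u_\nu(t,x))|-r)^+$ is a nonnegative g.s. of (\ref{13}), (\ref{2}) with initial data $(|g(u_0(x))|-r)^+\in L^1_{loc}(\R^n)$, so Proposition~\ref{pro1} applied to the approximate problem gives, for a.e.\ $t\in(0,T)$ and every $R>0$,
\begin{equation*}
\int_{|x|<R}(|g(u_\nu(t,x))|-r)^+\,dx\le\int_{|x|<R+NT}(|g(u_0(x))|-r)^+\,dx,
\end{equation*}
with the right-hand side independent of $\nu$ and tending to $0$ as $r\to\infty$ by dominated convergence.

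This last estimate delivers the required equi-integrability. For a compact $K\subset[0,T]\times\{|x|\le R_0\}\subset\bar\Pi$ and $\varepsilon>0$, choose $r$ so large that the right-hand side above with $R=R_0$ is at most $\varepsilon/(2T)$; then for any measurable $E\subset K$,
\begin{equation*}
\int_E|g(u_\nu)|\,dtdx\le\int_0^T\!\!\int_{|x|\le R_0}(|g(u_\nu)|-r)^+\,dxdt+r\,m(E)\le \frac{\varepsilon}{2}+r\,m(E),
\end{equation*}
which is $\le\varepsilon$ for $m(E)\le\varepsilon/(2r)$. Combined with the uniform $L^1(K)$ bound (the same estimate with $r=0$) and the a.e.\ convergence, Vitali's theorem yields $g(u_\nu)\to g(u)$ in $L^1_{loc}(\bar\Pi)$; in particular $g(u)\in L^1_{loc}(\bar\Pi)$, so the admissibility condition in Definition~\ref{def2} is met automatically.

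Finally, since $g(u_\nu)$ is a g.s. of (\ref{13}), (\ref{2}) with initial data $g(u_0)$,
\begin{equation*}
\int_\Pi[g(u_\nu)f_t+g(u_\nu)a_\nu\cdot\nabla_xf]\,dtdx+\int_{\R^n}g(u_0(x))f(0,x)\,dx=0
\end{equation*}
for every $f\in C_0^\infty(\bar\Pi)$; using the strong convergence $g(u_\nu)\to g(u)$ in $L^1_{loc}(\bar\Pi)$ together with $a_\nu\to a$ in $L^1_{loc}(\R^n)$ and the uniform bound $\|a_\nu\|_\infty\le N$, one may pass to the limit $\nu\to\infty$ to obtain the identity (\ref{3}) with $u$ replaced by $g(u)$ and $u_0$ by $g(u_0)$. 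Thus $g(u)$ is a g.s. of (\ref{1}), (\ref{2}) with initial function $g(u_0)$, which is exactly the renormalization property. The main obstacle is the equi-integrability step: for a general continuous $g$ without growth assumptions there is no pointwise domination available, and one must exploit precisely the renormalized a~priori estimate from Proposition~\ref{pro1} applied to $(|g(u_\nu)|-r)^+$ to transfer the $L^1_{loc}$ information from the initial data to the solution uniformly in $\nu$ and $t$.
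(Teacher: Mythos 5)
Your proof is correct, but it is organized differently from the paper's. The paper splits the argument into two truncation stages: first it treats bounded continuous $g$, where $g(u_\nu)\to g(u)$ in $L^1_{loc}(\bar\Pi)$ follows at once from Proposition~\ref{pro3} and dominated convergence, and passes to the limit in $\nu$; then, for general admissible $g$, it applies the already-proved identity to the cut-offs $g_k(u)=\max(-k,\min(g(u),k))$ and passes to the limit in $k$ by dominated convergence, using the hypotheses $g(u_0)\in L^1_{loc}(\R^n)$ and $g(u)\in L^1_{loc}(\bar\Pi)$ from Definition~\ref{def2}. You instead perform the truncation at the level of the approximating sequence: applying Proposition~\ref{pro2}(i) to $(|g(\cdot)|-r)^+$ and then Proposition~\ref{pro1} to the resulting nonnegative g.s.\ of (\ref{13}), (\ref{2}) gives local equi-integrability of $\{g(u_\nu)\}$ uniformly in $\nu$, and Vitali's theorem combined with the a.e.\ convergence supplied by Proposition~\ref{pro3} yields $g(u_\nu)\to g(u)$ in $L^1_{loc}(\bar\Pi)$ in a single pass. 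This is exactly the Dunford--Pettis-type mechanism the paper uses in Section~\ref{sec3} for the solutions themselves, transplanted to the compositions $g(u_\nu)$. Your route is slightly heavier (it invokes the a priori estimate and a uniform-integrability argument where the paper only needs dominated convergence twice), but it buys two genuine extras: the admissibility condition $g(u)\in L^1_{loc}(\bar\Pi)$ comes out automatically from $g(u_0)\in L^1_{loc}(\R^n)$ rather than being assumed, and you obtain strong $L^1_{loc}$ convergence of $g(u_\nu)$ to $g(u)$ for every such $g$, which is more information than the paper extracts at this point. The limit passage in the term $\int_\Pi g(u_\nu)a_\nu\cdot\nabla_x f\,dtdx$ is justified exactly as in Section~\ref{sec3} (split off $g(u)(a_\nu-a)\cdot\nabla_x f$ and use the uniform bound $\|a_\nu\|_\infty\le N$), so no gap remains there.
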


\begin{proof}
Let $g(u)$ be a bounded continuous function, $u_\nu(t,x)=T_t^\nu u_0(x)$, $\nu\in\N$ be g.s. of approximate problem
(\ref{13}), (\ref{2}). By Proposition~\ref{pro2}(i) $u_\nu(t,x)$ is a renormalized solution of (\ref{13}), (\ref{2}). Therefore, $g(u_\nu(t,x))$ is a g.s. of (\ref{13}), (\ref{2}) with initial data $g(u_0(x))$, that is, $\forall f=f(t,x)\in C_0^1(\bar\Pi)$
\begin{equation}\label{16}
\int_{\R^n} g(u_0(x))f(0,x)dx+\int_\Pi g(u_\nu(t,x))[f_t(t,x)+a_\nu(x)\cdot\nabla_x f(t,x)]dtdx=0.
\end{equation}
By Proposition~\ref{pro3} the sequence $g(u_\nu(t,x))\to  g(u(t,x))$ as $\nu\to\infty$ in $L^1_{loc}(\bar\Pi)$, which allows to pass to the limit as $\nu\to\infty$ in (\ref{16}) and obtain the relation: $\forall f=f(t,x)\in C_0^1(\bar\Pi)$
\begin{equation}\label{17}
\int_{\R^n} g(u_0(x))f(0,x)dx+\int_\Pi g(u(t,x))[f_t(t,x)+a(x)\cdot\nabla_x f(t,x)]dtdx=0,
\end{equation}
showing that $g(u)$ is a g.s. of (\ref{1}), (\ref{2}).
Consider now the general case $g(u)\in C(\R)$, $g(u_0(x))\in L^1_{loc}(\R^n)$, $g(u(t,x))\in L^1_{loc}(\bar\Pi)$.
Let $g_k(u)=\max(-k,\min(g(u),k))$, $k\in\N$, be cut-off functions. Then $g_k(u)\in C(\R)$, $|g_k(u)|\le k$, $g_k(u)\mathop{\to}\limits_{k\to\infty} g(u)$ $\forall u\in \R$,
$|g_k(u)|=\min(|g(u)|,k)\le |g(u)|$. The latter implies the estimates $|g_k(u_0(x))|\le |g(u_0(x))|$, $|g_k(u(t,x))|\le |g(u(t,x))|$. As we already proved, $g_k(u(t,x)$ are g.s. of (\ref{1}), (\ref{2}) with initial functions $g_k(u_0(x))$. Therefore, identity (\ref{17}) holds with $g=g_k$. Passing to the limit in this relation as $k\to\infty$, with the help of Lebesgue dominated convergence theorem, we arrive at the same identity (\ref{17}) with the limit function $g$. We conclude that $g(u)$ is a g.s. of (\ref{1}), (\ref{2}) with initial data $g(u_0)$.
Thus, $u$ is a renormalized solution of (\ref{1}), (\ref{2}).
\end{proof}

\begin{corollary}\label{cor2}. Assume that the operator $A$ is skew-adjoint. Then for every $u_0(x)\in L^2_{loc}(\R^n)$ there exists a renormalized solution $u(t,x)\in L^2_{loc}(\bar\Pi)$ of the problem (\ref{1}), (\ref{2}).
\end{corollary}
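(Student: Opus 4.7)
The plan is to reduce to Corollary~\ref{cor1} by truncating $u_0$ to an exhaustion by balls, then gluing the resulting $L^2$-renormalized solutions via the finite speed of propagation contained in Proposition~\ref{pro1}. For each $k\in\N$, set $u_{0,k}(x)=u_0(x)$ for $|x|<k$ and $u_{0,k}(x)=0$ otherwise, so that $u_{0,k}\in L^2(\R^n)$. By Corollary~\ref{cor1}, the function $u_k(t,x)=T_tu_{0,k}(x)$ is a renormalized solution of (\ref{1}), (\ref{2}) with initial datum $u_{0,k}$.

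To glue these solutions, fix $k<j$ and observe that $u_j-u_k=T_t(u_{0,j}-u_{0,k})$ is itself a group solution (by linearity), hence a renormalized solution by Corollary~\ref{cor1}. Applying this renormalization property with $g(s)=s^2$, the function $(u_j-u_k)^2$ is a nonnegative g.s. with initial datum $(u_{0,j}-u_{0,k})^2\in L^1(\R^n)$, which vanishes identically on the ball $|x|<k$. Proposition~\ref{pro1}, estimate (\ref{6}) applied with radius $k-Nt$ (for $0<t<k/N$) then yields
$$
\int_{|x|<k-Nt}(u_j(t,x)-u_k(t,x))^2\,dx\le\int_{|x|<k}(u_{0,j}-u_{0,k})^2\,dx=0,
$$
so $u_j=u_k$ a.e. on the cone $\{(t,x):0<t<k/N,\ |x|+Nt<k\}$.

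This local stabilization lets me define $u(t,x)$ to be the common value $u_k(t,x)$ for any $k>|x|+Nt$. Since $T_t$ is unitary, $\|u_k(t,\cdot)\|_2=\|u_{0,k}\|_2$, which gives $u\in L^\infty_{loc}([0,+\infty);L^2_{loc}(\R^n))\subset L^2_{loc}(\bar\Pi)$. To verify the renormalization property, take any $g\in C(\R)$ with $g(u_0)\in L^1_{loc}(\R^n)$, $g(u)\in L^1_{loc}(\bar\Pi)$, and any test function $f\in C_0^\infty(\bar\Pi)$. Choose $k$ so large that $\supp f\subset\{|x|+Nt<k\}$ (in particular $\supp f(0,\cdot)\subset\{|x|<k\}$). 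On these supports one has $u\equiv u_k$ and $u_0\equiv u_{0,k}$, so the identity (\ref{3}) for $g(u_k)$ against $f$ (valid because $u_k$ is renormalized) coincides with the desired identity for $g(u)$ with initial datum $g(u_0)$. Hence $u$ is a renormalized solution.

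The only delicate point is the finite speed of propagation for the difference $u_j-u_k$: it rests on the fact that Corollary~\ref{cor1} gives renormalization for group solutions (so that we may square the difference to obtain a nonnegative g.s.) and on estimate (\ref{6}) to confine its support. Once that is secured, the construction and verification of the renormalized property are routine localization arguments.
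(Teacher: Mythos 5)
Your proposal is correct and follows essentially the same route as the paper: truncate $u_0$ to balls, use Corollary~\ref{cor1} to get renormalized group solutions, show agreement on the cones $\{|x|+Nt<k\}$ via the renormalization of the difference plus Proposition~\ref{pro1}, and glue. The only cosmetic difference is that you apply $g(s)=s^2$ to the difference where the paper uses $g(s)=|s|$; both are legitimate since $u_{0,j}-u_{0,k}\in L^2$.
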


\begin{proof}
Let $u_r=u_r(t,x)\in C(\R,L^2(\R^n))$ be a group solution of (\ref{1}), (\ref{2}) with initial function $u_{0r}=u_0(x)\theta(r-|x|)\in L^2(\R^n)$ (recall that $\theta(s)$ is the Heaviside function). By Corollary~\ref{cor1} $u_r(t,x)$ is a renormalized solution of (\ref{1}), (\ref{2}) for each $r\in\N$.
Since the difference $u_l-u_r$ is a group solution and, therefore, also a renormalized solution of problem (\ref{1}), (\ref{2}) with initial data $u_{0l}-u_{0r}$, $l,r\in\N$, then $|u_l-u_r|$ is a nonnegative g.s. of this problem with initial function $|u_{0l}-u_{0r}|$. By Proposition~\ref{pro1}, we find that for all $t>0$
$$
\int_{|x|<r-Nt} |u_l(t,x)-u_r(t,x)|dx\le\int_{|x|<r} |u_{0l}(x)-u_{0r}(x)|dx=0, \ \forall l>r,
$$
and $u_l(t,x)=u_r(t,x)$ almost everywhere in the cone $C_r=\{ \ (t,x)\in\Pi \ | \ |x|<r-Nt \ \}$.
This implies that the sequence $u_r$ converges as $r\to\infty$ to a function $u=u(t,x)$, where $u=u_r(t,x)$ whenever $(t,x)\in C_r$ for some $r\in\N$. It is clear that $u(t,x)\in L^2_{loc}(\bar\Pi)$. Let us demonstrate that $u$ is the desired renormalized solution. Let a function $g(u)\in C(\R)$ be such that $g(u_0(x))\in L^1_{loc}(\R^n)$, $g(u(t,x))\in L^1_{loc}(\bar\Pi)$, and $f=f(t,x)\in C_0^1(\bar\Pi)$. Then one can choose a sufficiently large $r\in\N$ such that
$\supp f\subset C_r$. Since $u=u_r$ in $C_r$ while $u_r$ is a renormalized solution, we conclude that
\begin{eqnarray*}
\int_{\R^n} g(u_0(x))f(0,x)dx+\int_\Pi g(u(t,x))[f_t+a(x)\cdot\nabla_x f]dtdx=\\
\int_{\R^n} g(u_{0r}(x))f(0,x)dx+\int_\Pi g(u_r(t,x))[f_t+a(x)\cdot\nabla_x f]dtdx=0.
\end{eqnarray*}
Hence, $u$ is a renormalized solution of (\ref{1}), (\ref{2}).
\end{proof}

\begin{theorem}\label{th3} Assume that $A$ is a skew-adjoint operator, and $\div (a(x)u(x))=0$ in $\D'(\R^n)$,
where $u(x)\in L^2_{loc}(\R^n)$.
Then $\div (a(x)g(u(x)))=0$ in $\D'(\R^n)$ for any $g(u)\in C(\R)$ such that $g(u(x))\in L^1_{loc}(\R^n)$.
\end{theorem}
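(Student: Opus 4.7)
The idea is to reduce to the renormalization of group solutions (Corollary~\ref{cor1}) by showing that \(u(x)\), viewed as a stationary function, coincides on every large space-time cone with a genuine group solution obtained from a compactly supported truncation \(u_r = u\chi_r \in L^2(\R^n)\). I would first handle the case when \(g\) is bounded continuous, and then remove that restriction by applying the bounded case to \(g_k(s) \doteq \max(-k,\min(g(s),k))\) and passing to the limit \(k\to\infty\) via dominated convergence (\(|g_k(u)|\le |g(u)|\in L^1_{loc}(\R^n)\)), in the spirit of the last part of the proof of Corollary~\ref{cor1}.

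Pick \(\chi_r \in C_0^\infty(\R^n)\) with \(\chi_r\equiv 1\) on \(B_r\) and \(\supp\chi_r\subset B_{r+1}\), and set \(u_r = u\chi_r\in L^2(\R^n)\). The assumption \(\div(au)=0\) combined with \(\div a=0\) gives \(a\cdot\nabla u = 0\) in \(\D'(\R^n)\), so
\[
\div(au_r) = \chi_r\,a\cdot\nabla u + u\,a\cdot\nabla\chi_r = u\,a\cdot\nabla\chi_r\in L^2(\R^n),
\]
supported in the annulus \(\{r\le |x|\le r+1\}\). Since \(A\) is skew-adjoint, \(D(A)=D(A^*)\) and \(u_r\in D(A)\) with \(Au_r = u\,a\cdot\nabla\chi_r\). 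Let \(T_t = e^{-At}\) be the unitary group generated by \(-A\) and put \(w_r(t,x) = T_tu_r(x)\); by Corollary~\ref{cor1}, \(w_r\) is a renormalized solution of (\ref{1}), (\ref{2}) with initial data \(u_r\).

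The crucial step is the identification \(w_r(t,x) = u(x)\) on \(K_{r,R}\doteq\{(t,x)\colon |x|<R,\ 0\le t< (r-R)/N\}\) for every \(R<r\). Since \(u_r\in D(A)\), one has the semigroup identity \(w_r(t,\cdot) = u_r - \int_0^t T_s(Au_r)\,ds\) in \(L^2(\R^n)\). By Corollary~\ref{cor1}, the group solution \(T_s(Au_r)\) is renormalized, so \(|T_s(Au_r)|\) is a nonnegative g.s.\ with initial data \(|Au_r|\). Proposition~\ref{pro1}, estimate (\ref{6}), gives
\[
\int_{|x|<R}|T_s(Au_r)|\,dx\le\int_{|x|<R+Ns}|Au_r|\,dx = 0\quad\text{whenever } R+Ns\le r,
\]
because \(|Au_r|\) vanishes on \(B_r\). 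Hence \(T_s(Au_r)=0\) a.e.\ on \(B_R\) for a.e.\ \(s\in[0,(r-R)/N]\), the integral vanishes on \(B_R\), and \(w_r = u_r = u\) a.e.\ on \(K_{r,R}\). I expect this cone identification to be the main obstacle: it requires both the membership \(u_r\in D(A)\) (which uses the solenoidality of \(a\) together with \(\div(au)=0\)) and the finite-speed-of-propagation bound applied to the forcing \(T_s(Au_r)\), which in turn relies on Corollary~\ref{cor1}.

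Finally, for bounded \(g\), the renormalization of \(w_r\) yields that \(g(w_r)\) is a g.s.\ with initial data \(g(u_r)\). Substitute \(f(t,x) = \phi(x)\eta(t)\) with \(\phi\in C_0^1(\R^n)\), \(\supp\phi\subset B_R\), and \(\eta\in C_0^1([0,(r-R)/N))\) into the identity (\ref{3}) for \(g(w_r)\). Since \(\supp f\subset K_{r,R}\) and \(g(w_r) = g(u)\) there while \(g(u_r)=g(u)\) on \(B_R\), the identity collapses (using \(\int_0^\infty \eta'(t)\,dt = -\eta(0)\)) to
\[
\Big(\int_0^\infty \eta(t)\,dt\Big)\int_{\R^n} g(u)\,a\cdot\nabla\phi\,dx = 0.
\]
Choosing \(\eta\) with \(\int\eta\ne 0\) and letting \(r\to\infty\), \(R\to\infty\), one obtains \(\int g(u)\,a\cdot\nabla\phi\,dx = 0\) for every \(\phi\in C_0^1(\R^n)\), i.e., \(\div(ag(u))=0\) in \(\D'(\R^n)\), completing the proof after the truncation argument on \(g\) described above.
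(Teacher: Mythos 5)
Your proposal is correct and follows essentially the same route as the paper: truncate $u$ to $u_r=u\chi_r\in D(A)=D(A^*)$ using $\div(au)=0$, observe that $Au_r$ is supported in an annulus $|x|\ge r$, use the finite-speed estimate of Proposition~\ref{pro1} applied to the group solution emanating from $Au_r$ (via Corollary~\ref{cor1}) to conclude $T_tu_r=u$ on a large cylinder, and then transfer the renormalization property of the group solution to the stationary function $g(u)$ by a locality argument, finishing with the cut-off functions $g_k$. The only cosmetic differences are your use of the Duhamel identity $w_r(t)=u_r-\int_0^t T_s(Au_r)\,ds$ where the paper works directly with $\frac{d}{dt}U_r=-e^{-At}Au_r$, and your explicit product test functions in the last step where the paper simply notes that $g(U_r)\equiv g(u)$ is time-independent on the cylinder.
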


\begin{proof}
Let $p(y)\in C_0^1(\R^n)$ be a function equaled $1$ in the unit ball
$|y|^2\le 1$. We set $u_r(x)=u(x)p(x/r)\in L^2(\R^n)$. By our assumption the operator $A$ is skew-adjoint and, in view of equality $A=-(A)^*$, this operator may be considered in distributional sense. Obviously, for all $r>0$
\begin{eqnarray*}
Au_r(s,x)=v_r(x)\doteq u(x)A p(x/r)=\frac{1}{r}u(x)a(x)\cdot(\nabla_y p)(x/r) \ \mbox{ in } \D'(\R^n).
\end{eqnarray*}
Since $v_r(x)\in L^2(\R^n)$, then $u_r(x)\in D(A)$. Now let $U_r(t,x)=e^{-At}u_r(x)$ be the group solution of (\ref{1}), (\ref{2}) with initial data $u_r(x)$. As we demonstrated above, $u_r(x)\in D(A)$. Therefore, $U_r(t,\cdot)\in C^1(\R,L^2(\R^n))$, and
$$
V_r\doteq\frac{d}{dt}U_r(t,\cdot)=-e^{-At}Au_r=-e^{-At}v_r.
$$
We see that  $V_r(t,x)$ is a renormalized solution to the Cauchy problem (\ref{1}), (\ref{2}) with initial data $-v_r(x)$. By Corollary~\ref{cor1} $|V_r(t,x)|$ is a g.s. of this problem with initial function $|v_r(x)|$.
Let $T,R>0$, $r>R+NT$. Then by Proposition~\ref{pro1} for all $t\in [0,T]$
$$
\int_{|x|<R}|V_r(t,x)|dx\le \int_{|x|<r}|v_r(x)|dx=0
$$
(since $(\nabla_y p)(x/r)=0$ for $|x|<r$).

We find that $V_r=\frac{d}{dt}U_r\equiv 0$ in the cylinder $C_{R,T}=\{ \ (t,x) \ | \ |x|<R, \ t\in (0,T) \ \}$.
This implies that $U_r\equiv u_r=u$ in this cylinder. Now, let $g(u)$ be a bounded continuous function. By Corollary~\ref{cor1} the function $g(U_r)$ is a g.s. of (\ref{1}), (\ref{2}). Therefore this function satisfies (\ref{1}) in $\D'(C_{R,T})$. Since $g(U_r)\equiv g(u)$ in $C_{R,T}$, we obtain that $\div_x (ag(u))=0$ in $\D'(V_R)$,
where $V_R$ denotes the open ball $|x|<R$. In view of arbitrariness of $R$ we conclude that $\div_x (ag(u))=0$ in $\D'(\R^n)$. In the general case when $g(u)\in C(\R)$, $g(u(t,x))\in L^1_{loc}(\R^n)$, we construct the sequence
of cut-off functions $g_k(u)=\max(-k,\min(g(u),k))$. Then $\div_x (ag_k(u))=0$ in $\D'(\R^n)$ for all $k\in\N$.
Since $g_k(u(x))\to g(u(x))$ as $k\to\infty$ in $L^1_{loc}(\R^n)$ (cf. the proof of Corollary~\ref{cor1}), we
can pass to the limit as $k\to\infty$ in the relation $\div_x (ag_k(u))=0$ and conclude that $\div_x (ag(u))=0$  in $\D'(\R^n)$.
\end{proof}

\section{Main result: the sufficiency}\label{sec5}
We are going to establish the much stronger result than the statement of Corollary~\ref{cor1}, claiming that any generalized solution $u(t,x)$ of (\ref{1}), (\ref{2}) is a renormalized solution, that is, the sufficiency statement of our main Theorem~\ref{main}.

We define the operator $\tilde A_0=\frac{\partial}{\partial s}+A_0$ acting on $C_0^1(\R^{n+1})$, so that $\tilde A_0u(s,x)=\frac{\partial u(s,x)}{\partial s}+a(x)\nabla_x u(s,x)$. Let $\tilde A$ be a closure of $\tilde A_0$ in $L^2(\R^{n+1})$. We will prove that $\tilde A$ is a skew-adjoint operator whenever $A$ is a skew-adjoint operator on $L^2(\R^n)$. First, we observe that, at least formally, operator $-\tilde A$ should coincide with the infinitesimal generator of the unitary group $G_t u(s,\cdot)=T_t u(s-t,\cdot)$, $u(s,x)\in L^2(\R^{n+1})$.
By Stone's theorem $G_t=e^{-Bt}$, where $B$ is a skew-adjoint operator on $L^2(\R^{n+1})$. The following statement justifies this formal observation.

\begin{lemma}\label{lem2}
The equality $\tilde A=B$ holds. In particular, the operator $\tilde A$ is skew-adjoint.
\end{lemma}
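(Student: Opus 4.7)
My plan is to mirror the proof of Theorem~\ref{th1}: first establish $\tilde A\subset B$ by differentiating $G_t$ on $C_0^1(\R^{n+1})$, then squeeze $\tilde A\subset B\subset -\tilde A^*$ by taking adjoints, and finally identify $B$ with $\tilde A$ via a resolvent argument. Where Theorem~\ref{th1} uses the DiPerna--Lions renormalization lemma to close the remaining gap, I will instead Fourier-transform in the extra variable $s$ and exploit the slicewise skew-adjointness of $A$ that is now assumed.

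For the first inclusion, fix $u\in C_0^1(\R^{n+1})$ and split
\[
G_t u(s,x)-u(s,x)=T_t\bigl[u(s-t,\cdot)-u(s,\cdot)\bigr](x)+\bigl[T_t u(s,\cdot)(x)-u(s,x)\bigr].
\]
Compact support of $u$, the isometry $\|T_t\|=1$ and the $C^1$ regularity in $s$ let the first bracket divided by $t$ converge in $L^2(\R^{n+1})$ to $-\partial_s u$; since $u(s,\cdot)\in C_0^1(\R^n)\subset D(A)$ with $\|Au(s,\cdot)\|_2\in L^2(\R_s)$, the standard bound $\|T_t v-v\|_2\le t\|Av\|_2$ together with dominated convergence make the second bracket divided by $t$ converge to $-a\cdot\nabla_x u$. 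Hence $u\in D(B)$ with $Bu=\tilde A_0 u$, and closedness of $B$ upgrades this to $\tilde A\subset B$; taking adjoints (using $B^*=-B$) yields $B\subset -\tilde A^*$, while a brief integration by parts with $\div a=0$ identifies $\tilde A^*$ with the distributional operator $-\partial_s-\div_x(a\,\cdot)$ on $L^2(\R^{n+1})$.

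To promote $B\subset -\tilde A^*$ to equality I copy the resolvent step of Theorem~\ref{th1}. For $u\in D(\tilde A^*)$, put $f=u-\tilde A^* u\in L^2(\R^{n+1})$ and let $v=(E+B)^{-1}f\in D(B)$ (the resolvent exists because $B$ is skew-adjoint). Then $v+Bv=f$ and the inclusion $B\subset -\tilde A^*$ gives $v-\tilde A^* v=u-\tilde A^* u$, so $w:=u-v\in D(\tilde A^*)$ satisfies $\tilde A^* w=w$, which reads
\[
\partial_s w+\div_x(a(x)w)=-w \ \mbox{ in } \D'(\R^{n+1}).
\]
Once $w\equiv 0$ is known, $u=v\in D(B)$, so $D(\tilde A^*)=D(B)$ and $B=-\tilde A^*$; a final adjoining yields $\tilde A=\tilde A^{**}=(-B)^*=-B^*=B$.

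The main obstacle is this uniqueness claim, and I would settle it by decoupling in $s$ through Plancherel. Using the isomorphism $L^2(\R^{n+1})\cong L^2(\R_\sigma;L^2(\R^n,\C))$ and testing the distributional identity against separable functions $\phi(s)\psi(x)$ with $\psi$ running over a countable dense family in $C_0^1(\R^n)$, I descend to a common full-measure set of $\sigma$'s on which
\[
\div_x\bigl(a(x)\hat w(\sigma,x)\bigr)=-(1+i\sigma)\hat w(\sigma,x) \ \mbox{ in } \D'(\R^n).
\]
This places $\hat w(\sigma,\cdot)\in D(A_\C^*)$ with $A_\C^*\hat w=(1+i\sigma)\hat w$, where $A_\C$ denotes the complex extension of $A$. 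Since $A$ is skew-adjoint by hypothesis, $A_\C^*=-A_\C$, so $\hat w(\sigma,\cdot)\in D(A_\C)$ and $A_\C\hat w=-(1+i\sigma)\hat w$; pairing with $\hat w$ in complex $L^2(\R^n)$ and taking real parts gives
\[
0=\Re(A_\C\hat w,\hat w)_\C=-\|\hat w(\sigma,\cdot)\|_2^2
\]
for a.e.\ $\sigma$, so $\hat w\equiv 0$ and $w=0$. The subtle point, where I would spend most of the effort, is the descent from one $\D'(\R^{n+1})$-identity to the $\sigma$-by-$\sigma$ $\D'(\R^n)$-identities, for which the countability of the test family and the Fubini/Plancherel isomorphism are essential.
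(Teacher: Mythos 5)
Your proof is correct, but it follows a genuinely different route from the paper's. Both arguments begin the same way, obtaining $\tilde A_0\subset B$ by differentiating $G_t$ on $C_0^1(\R^{n+1})$ and hence $\tilde A\subset B$ by closedness. From there the paper proves the reverse inclusion $B\subset\tilde A$ \emph{directly}: it introduces the space $F$ of $u\in L^2(\R,D(A))$ with $\frac{d}{ds}u\in L^2(\R,L^2)$, checks that $B=\tilde A$ on $F$, and then uses the Laplace-transform representation $(E+B)^{-1}f=\int_0^{+\infty}e^{-t}G_tf\,dt$ together with the invariance of $D(A)$ under the unitary group $T_t$ (so that $\|T_tu\|_{X_0}=\|u\|_{X_0}$) to show that $(E+B)^{-1}$ maps the dense subspace $L^2(\R,D(A))$ into $F$; closedness of $\tilde A$ then yields $D(B)\subset D(\tilde A)$ with no uniqueness lemma at all. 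You instead squeeze $\tilde A\subset B\subset-\tilde A^*$ and reduce equality to the injectivity of $E-\tilde A^*$, which you settle by Fourier transform in $s$ and the slicewise identity $A_\C^*=-A_\C$; this is exactly the template of the proof of Theorem~\ref{th1}, with the DiPerna--Lions renormalization lemma replaced by the standing hypothesis that $A$ is skew-adjoint. Your version has the merit of making visible precisely where that hypothesis enters (it kills the eigenfunctions of $\tilde A^*$ with eigenvalue $1+i\sigma$ slice by slice), while the paper's version avoids the measurability/slicing issues entirely and stays within pure semigroup theory. The one step you must carry out with care is the descent from the single identity in $\D'(\R^{n+1})$ to the $\D'(\R^n)$ identities for a.e.\ $\sigma$: testing against products $\phi(s)\psi(x)$, using that $\{\hat\phi:\phi\in C_0^\infty(\R)\}$ is dense in the Schwartz class, and passing to a countable family of $\psi$'s dense in $C_0^1(K)$ for the $H^1$ norm on each compact $K$ (the slice functional $\psi\mapsto\int\hat w(\sigma,x)\left[(1+i\sigma)\psi-a\cdot\nabla\psi\right]dx$ is $H^1$-continuous for a.e.\ $\sigma$, since $\hat w(\sigma,\cdot)\in L^2(\R^n)$ and $a$ is bounded). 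You correctly flag this as the delicate point; once it is written out, your argument is complete.
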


\begin{proof}
We denote by $X$ the space $L^2(\R^n)$ and by $X_0$ the space $D(A)$ equipped with the graph norm $\|x\|_2+\|Ax\|_2$. Since the operator $A$ is closed, $X_0$ is a Banach space. Let $F$ be a subspace of $L^2(\R^{n+1})=L^2(\R,X)$ consisting of
functions $u(s,\cdot)\in L^2(\R,X_0)$, such that $\frac{d}{ds}u(s,\cdot)\in L^2(\R,X)$.
We show that $F\subset D(B)\cap D(\tilde A)$ and $Bu=\tilde Au$ on $F$. Thus, assume that $u(s,x)\in F$.
Then,
$$
\frac{G_tu-u}{t}=T_t \frac{u(s-t,\cdot)-u(s,\cdot)}{t}+\frac{T_tu(s,\cdot)-u(s,\cdot)}{t}
$$
Since
\begin{eqnarray*}
\lim_{t\to 0} \frac{u(s-t,\cdot)-u(s,\cdot)}{t}=-\frac{d}{ds}u(s,\cdot), \\ \\
\lim_{t\to 0} \frac{T_tu(s,\cdot)-u(s,\cdot)}{t}=-Au(s,\cdot) \ \mbox{ in } L^2(\R,X),
\end{eqnarray*}
we find that there exists
$$
-Bu=\lim_{t\to 0}\frac{G_tu-u}{t}=-\frac{d}{ds}u(s,\cdot)-Au(s,\cdot) \ \mbox{ in } L^2(\R,X),
$$
that is, $u\in D(-B)=D(B)$ and
\begin{equation}\label{18}
Bu=\frac{d}{ds}u(s,\cdot)+Au(s,\cdot).
\end{equation}
Let us show that the same holds for the operator $\tilde A$. Assume firstly that
\begin{equation}\label{19}
u(s,x)=\sum_{j=1}^N \alpha_j(s)v_j(x), \quad \alpha_j(s)\in C_0^1(\R),  \ v_j\in X_0,  \ j=1,\ldots,N.
\end{equation}
If $v_j(x)\in C_0^1(\R^n)$, then $u(s,x)\in C_0^1(\R^{n+1})=D(\tilde A_0)$ and then
$$
\tilde Au(s,x)=\tilde A_0 u(s,x)=\frac{\partial}{\partial s}u(s,x)+a(x)\cdot\nabla_x u(s,x)=\frac{d}{ds}u(s,\cdot)+Au(s,\cdot).
$$
In the case of arbitrary $v_j\in X_0$ we can find sequences $v_{jr}\in C_0^1(\R^n)$, $r\in\N$, converging to $v_j$ as $r\to\infty$ in $X_0$ (because $A$ is the closure of $A_0$). Then the sequences
\begin{eqnarray*}
u_r(s,x)=\sum_{j=1}^N \alpha_j(s)v_{jr}(x)\mathop{\to}_{r\to\infty} u(s,x), \
\tilde Au_r=\sum_{j=1}^N \alpha_j'(s)v_{jr}(x)+ \\ \sum_{j=1}^N \alpha_j(s)Av_{jr}(x)\mathop{\to}_{r\to\infty}
\sum_{j=1}^N \alpha_j'(s)v_j(x)+\sum_{j=1}^N \alpha_j(s)Av_j(x)=\frac{d}{ds}u(s,\cdot)+Au(s,\cdot)
\end{eqnarray*}
in $L^2(\R,X)$. Since the operator $\tilde A$ is closed, we conclude that $u(s,x)\in D(\tilde A)$ and
$\displaystyle\tilde Au(s,\cdot)=\frac{d}{ds}u(s,\cdot)+Au(s,\cdot)$.
Now we consider the general case $u(s,x)\in F$. Then, as is easy to verify, there exists a sequence $u_m(s,x)$, $m\in\N$, of functions having form (\ref{19}) such that $\displaystyle u_m(s,\cdot)\mathop{\to}_{m\to\infty} u(s,\cdot)$ in $L^2(\R,X_0)$, $\displaystyle\frac{d}{ds}u_m(s,\cdot)\mathop{\to}_{m\to\infty}\frac{d}{ds}u(s,\cdot)$ in $L^2(\R,X)$.
Then $\displaystyle u_m(s,\cdot)\mathop{\to}_{m\to\infty} u(s,\cdot)$, $\displaystyle\tilde Au_m(s,\cdot)\mathop{\to}_{m\to\infty}\frac{d}{ds}u(s,\cdot)+Au(s,\cdot)$ in $L^2(\R,X)$, which implies that $u\in D(\tilde A)$, $\tilde Au(s,\cdot)=\frac{d}{ds}u(s,\cdot)+Au(s,\cdot)$ again due to the closedness of $\tilde A$.

In view of (\ref{18}) we conclude that $F\subset D(B)\cap D(\tilde A)$, and $B=\tilde A$ on $F$.
By the known representation of the resolvent $(E+B)^{-1}$, we find
\begin{eqnarray*}
u(s,\cdot)=(E+B)^{-1}f(s,\cdot)=\int_0^{+\infty}e^{-t}G_t fdt=\\ \int_0^{+\infty}e^{-t}T_t f(s-t,\cdot)dt=
\int_{-\infty}^s e^{t-s}T_{s-t} f(t,\cdot)dt.
\end{eqnarray*}
Notice that $X_0$ is an invariant space for a group $T_t$ and since $\|T_tu\|_2=\|u\|_2$, $\|AT_tu\|_2=\|T_tAu\|_2=\|Au\|_2$, then $\|T_t u\|_{X_0}=\|u\|_{X_0}$. Therefore, taking $f(s,x)\in L^2(\R,X_0)$, we find
\begin{eqnarray*}
U(s)\doteq\|u(s,\cdot)\|_{X_0}\le \int_{-\infty}^s e^{t-s}\|T_{s-t} f(t,\cdot)\|_{X_0}dt= \\ \int_{-\infty}^s e^{t-s}\|f(t,\cdot)\|_{X_0}dt=(\gamma*F)(s),
\end{eqnarray*}
where $F(t)=\|f(t,\cdot)\|_{X_0}$, $\gamma(t)=\theta(t)e^{-t}$ (recall that $\theta(t)$ is the Heaviside function). It is clear that $\|\gamma\|_1=1$ and by the known property of convolutions $\|U\|_2\le\|F\|_2$, that is,
$u(s,\cdot)\in L^2(\R,X_0)$, $\|u(s,\cdot)\|_{L^2(\R,X_0)}\le \|f(t,\cdot)\|_{L^2(\R,X_0)}$.
Further, there exists the derivative
\begin{eqnarray*}
\frac{d}{ds}u(s,\cdot)=\frac{d}{ds}\int_{-\infty}^s e^{t-s}T_{s-t} f(t,\cdot)dt=f(s,\cdot)-\int_{-\infty}^s e^{t-s}T_{s-t} f(t,\cdot)dt-\\ \int_{-\infty}^s e^{t-s}AT_{s-t} f(t,\cdot)dt=
f(s,\cdot)-u(s,\cdot)-Au(s,\cdot)\in L^2(\R,X).
\end{eqnarray*}
We see that $u(s,\cdot)\in F$. Assume that $u(s,\cdot)\in D(B)$. Then, there exists a unique $f(s,\cdot)\in L^2(\R,X)$ such that $u(s,\cdot)=(E+B)^{-1}f(s,\cdot)$. Evidently, $L^2(\R,X_0)$ is dense in $L^2(\R,X)$, which implies existence of a sequence $f_k(s,\cdot)\in L^2(\R,X_0)$, $k\in\N$, such that $f_k\to f$ as $k\to\infty$ in $L^2(\R,X)$. We define the corresponding sequence $u_k=u_k(s,\cdot)=(E+B)^{-1}f_k$. Then $u_k\to u$, $\tilde Au_k=Bu_k\to Bu$ as $k\to\infty$ in $L^2(\R,X)$. Since $\tilde A$ is a closed operator, we derive that $u\in D(\tilde A)$ and $\tilde Au=Bu$. Hence, $B\subset\tilde A$. Conversely, $\tilde A_0\subset B$ (since, evidently, $D(\tilde A_0)\subset F$~), which implies $\tilde A\subset B$ as the closure of $\tilde A_0$. We conclude that $\tilde A=B$, as required.
\end{proof}

Now, we are ready to prove the renormalization property.

\begin{theorem}\label{th4}
Assume that operator $A$ is skew-adjoint and $u_0\in L^2_{loc}(\R^n)$. Then any g.s. $u(t,x)\in L^2_{loc}(\bar\Pi)$ of the problem (\ref{1}), (\ref{2}) is a renormalized solution of this problem and, therefore, is unique.
\end{theorem}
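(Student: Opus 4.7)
The plan is to apply the argument behind Theorem~\ref{th3}, not to the operator $A$ on $L^2(\R^n)$ but to its space-time lift $\tilde A$ on $L^2(\R^{n+1})$, whose skew-adjointness is provided by Lemma~\ref{lem2}. I will extend the given g.s.~$u$ to a function $\tilde u\in L^2_{loc}(\R^{n+1})$ satisfying the lifted transport equation $\tilde u_t+\div_x(a\tilde u)=0$ globally in $\D'(\R^{n+1})$; the $\R^{n+1}$-analog of Theorem~\ref{th3} then forces $g(\tilde u)_t+\div_x(ag(\tilde u))=0$ in $\D'(\R^{n+1})$, and the renormalization property for $u$ (with initial trace $g(u_0)$) follows by restriction back to $\bar\Pi$.

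To build the extension, I exploit that skew-adjointness of $A$ makes $T_t=e^{-At}$ a unitary group on $L^2(\R^n)$ for every $t\in\R$. Taking the truncations $u_{0,r}=u_0(x)\theta(r-|x|)\in L^2$ and the bilateral group solutions $U_r(t,x)=(T_tu_{0,r})(x)$, defined for all $t\in\R$, and running Proposition~\ref{pro1} in both time directions, the $U_r$ agree on the bilateral cones $|x|<r-N|t|$, and patching gives $\tilde U\in L^2_{loc}(\R\times\R^n)$ satisfying $\tilde U_t+\div_x(a\tilde U)=0$ in $\D'(\R\times\R^n)$ with initial trace $u_0$ at $t=0$. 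Set $\tilde u(t,x)=u(t,x)$ for $t\ge 0$ and $\tilde u(t,x)=\tilde U(t,x)$ for $t<0$. For any $\varphi\in C_0^\infty(\R^{n+1})$, Definition~\ref{def1} applied to $u$ yields $\int_\Pi u[\varphi_t+a\cdot\nabla_x\varphi]\,dtdx=-\int u_0(x)\varphi(0,x)dx$, while combining the global equation for $\tilde U$ with Definition~\ref{def1} applied to $\tilde U|_{\bar\Pi}$ yields $\int_{t<0}\tilde U[\varphi_t+a\cdot\nabla_x\varphi]\,dtdx=+\int u_0(x)\varphi(0,x)dx$. These contributions cancel, so $\tilde u$ solves the lifted equation on all of $\R^{n+1}$.

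Next I invoke the $\R^{n+1}$-analog of Theorem~\ref{th3}. The lifted field $(1,a(x))\in L^\infty(\R^{n+1},\R^{n+1})$ is solenoidal, $\tilde A$ is the closure of $\partial_s+a\cdot\nabla_x$ on $C_0^1(\R^{n+1})$, and $\tilde A$ is skew-adjoint by Lemma~\ref{lem2}; hence every ingredient of the proof of Theorem~\ref{th3} (the unitary group $e^{-\tilde A s}$, finite propagation at speed $\sqrt{1+N^2}$, renormalization of group solutions) transfers verbatim. Applying it to $\tilde u$ gives $g(\tilde u)_t+\div_x(ag(\tilde u))=0$ in $\D'(\R^{n+1})$, and applying it to $\tilde U$ gives $g(\tilde U)_t+\div_x(ag(\tilde U))=0$ in $\D'(\R^{n+1})$; in particular $g(\tilde U)|_{\bar\Pi}$ is a g.s.~with initial data $g(u_0)$. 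To descend: given $f\in C_0^\infty(\bar\Pi)$, take any smooth extension $\tilde f\in C_0^\infty(\R^{n+1})$ and test the global identity for $g(\tilde u)$; the $t<0$ contribution evaluates by the same splitting used in step two to $+\int g(u_0(x))f(0,x)dx$, leaving exactly the g.s.~identity for $g(u)$ with initial data $g(u_0)$. Hence $u$ is renormalized, and its uniqueness follows by the renormalization-implies-uniqueness argument of Section~\ref{sec3}.

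The main obstacle is the third step: the entire chain Proposition~\ref{pro1}, Corollary~\ref{cor1}, Proposition~\ref{pro3}, Theorem~\ref{th3} must be re-derived for $\tilde A$ acting on $L^2(\R^{n+1})$ with the bounded solenoidal field $(1,a(x))$. All structural hypotheses are in place (boundedness, solenoidality, skew-adjointness via Lemma~\ref{lem2}), so no new conceptual ingredient is needed, but the substitutions $A\leftrightarrow\tilde A$, $T_t\leftrightarrow e^{-\tilde A s}$, $N\leftrightarrow\sqrt{1+N^2}$ must be tracked through every proof step. A secondary point is the bilateral version of Proposition~\ref{pro1} used in constructing $\tilde U$; this is justified by the fact that $T_t$ is unitary for all $t\in\R$, so the same a priori estimate applies backward in time.
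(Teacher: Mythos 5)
Your proposal is correct and follows essentially the same route as the paper: extend $u$ to negative times by the (renormalized) group solution of the time-reversed problem, observe that the glued function satisfies $u_t+\div_x(au)=0$ in $\D'(\R^{n+1})$, invoke Lemma~\ref{lem2} for the skew-adjointness of $\tilde A$ on $L^2(\R^{n+1})$, and apply the $\R^{n+1}$-version of Theorem~\ref{th3} to the lifted stationary equation before descending to $\bar\Pi$. The only cosmetic difference is that the paper obtains the backward extension by citing Corollary~\ref{cor2} for the skew-adjoint operator $-A$, whereas you rebuild that extension by hand via truncation and patching on cones, which is exactly how Corollary~\ref{cor2} is proved.
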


\begin{proof}
We may extend $u(t,x)$ to a g.s. of (\ref{1}), (\ref{2}) on the whole space $\R^{n+1}$, setting
$u(-t,x)=v(t,x)$, where $v(t,x)\in L^2_{loc}(\bar\Pi)$ is a renormalized solution of the problem $v_t-\div(a(x)v)=0$, $v(0,x)=u_0(x)$. Since the operator $-A$ is skew-adjoint, this renormalized solution exists due to Corollary~\ref{cor2}. Then $u_t+\div (a(x)u)=0$ in $D'(\R^{n+1})$. By Lemma~\ref{lem2} the operator
$\frac{\partial}{\partial t}+\div (au)$ is skew-adjoint on $L^2(\R^{n+1})$. Then, by Theorem~\ref{th3}
$g(u)_t+\div (a(x)g(u))=0$ in $D'(\R^{n+1})$ whenever $g(u)\in L^1_{loc}(\R^{n+1})$. This easily implies that $u(t,x)$ is a renormalized solution of (\ref{1}), (\ref{2}).
\end{proof}

\begin{remark}\label{rem1}
In the case of more general transport equation
\begin{equation}\label{r1}
u_t+a(t,x)\cdot\nabla_x u=u_t+\div_x (a(t,x)u)=0
\end{equation}
with $a(t,x)=(a_1(t,x),\ldots,a_n(t,x))\in L^\infty(\Pi,\R^n)$, $\div_x a(t,x)=0$,
we may extend the field $a(t,x)$ on the whole space $(t,x)\in\R^{n+1}$, setting $a(t,x)=-a(-t,x)$ for $t<0$.
It is clear that the vector field $\tilde a(t,x)=\frac{\partial}{\partial t}+a(t,x)$ is bounded and solenoidal on $\R^{n+1}$, and for any g.s. $u(t,x)\in L^1_{loc}(\bar\Pi)$ of (\ref{r1}) the function $\tilde u(t,x)=u(|t|,x)$ is a g.s. of (\ref{r1}) in the whole space $\R^{n+1}$.

For equation (\ref{r1}) the following analogue of Theorem~\ref{th1} holds.

\begin{theorem}
Any g.s. of the Cauchy problem (\ref{r1}), (\ref{2}) is a renormalized solution if and only if the operator
$A_0u=\tilde a(t,x)\cdot\nabla u=\frac{\partial}{\partial t} u+a(t,x)\cdot\nabla_x u$, $u=u(t,x)\in C_0^1(\R^{n+1})$, is essentially skew-adjoint.
\end{theorem}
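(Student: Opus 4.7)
The strategy is to reduce to Theorem \ref{main} applied on $\R^{n+1}$. Following Remark \ref{rem1}, extend $a(t,x)$ to $\R^{n+1}$ via $a(t,x) = -a(-t,x)$ for $t<0$; then $\tilde a(t,x) = (1, a(t,x))$ is bounded measurable and solenoidal on $\R^{n+1}$, and the operator $A_0$ in the statement is exactly $\tilde a \cdot \nabla_{(t,x)}$ on $C_0^1(\R^{n+1}) \subset L^2(\R^{n+1})$. Let $\tilde A$ denote its closure; the task is to show $\tilde A = -\tilde A^*$ if and only if every g.s.\ of (\ref{r1}), (\ref{2}) is renormalized.

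Sufficiency is direct. Assuming $\tilde A$ skew-adjoint, let $u \in L^2_{loc}(\bar\Pi)$ be a g.s.\ of (\ref{r1}), (\ref{2}). By Remark \ref{rem1}, the symmetric extension $\tilde u(t,x) = u(|t|,x) \in L^2_{loc}(\R^{n+1})$ satisfies $\tilde a \cdot \nabla_{(t,x)} \tilde u = 0$ in $\D'(\R^{n+1})$. Theorem \ref{th3} applied on $\R^{n+1}$ then yields $\tilde a \cdot \nabla_{(t,x)} g(\tilde u) = 0$ in $\D'(\R^{n+1})$ for every $g \in C(\R)$ with $g(\tilde u) \in L^1_{loc}(\R^{n+1})$. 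Restricting to $\Pi$ and using continuity of $\tilde u$ across $t=0$ (so $g(\tilde u)(0^+,\cdot) = g(u_0)$), $g(u)$ is a g.s.\ of (\ref{r1}), (\ref{2}) with initial datum $g(u_0)$, proving renormalization.

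For necessity, assume every g.s.\ of (\ref{r1}), (\ref{2}) is renormalized, so it is unique and (via smooth approximations as in the proof of Theorem \ref{th2}) satisfies the $L^2$-conservation $\|u(t,\cdot)\|_2 = \|u_0\|_2$. Let $v \in D(\tilde A^*)$ satisfy $\tilde A^* v = v$, i.e., $v + \partial_t v + \div_x(av) = 0$ in $\D'(\R^{n+1})$ with $v \in L^2(\R^{n+1})$. The substitution $v(t,x) = e^{-t}w(t,x)$ cancels the zeroth-order term, yielding $\partial_t w + \div_x(aw) = 0$ in $\D'(\R^{n+1})$ with $w \in L^2_{loc}(\R^{n+1})$. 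On $\Pi$, $w$ is a g.s.\ of (\ref{r1}), (\ref{2}) with some initial trace $w_0 \in L^2(\R^n)$; the time-reversal symmetry of the extended $a$ combined with forward uniqueness forces $w(t,x) = w(-t,x)$, and the $L^2$-conservation propagates to $\|w(t,\cdot)\|_2 = \|w_0\|_2$ for all $t \in \R$. Hence $\|v(t,\cdot)\|_2 = e^{-t}\|w_0\|_2$, and $v \in L^2(\R^{n+1})$ forces $\|w_0\|_2^2 \int_\R e^{-2t}\,dt < \infty$, which is impossible unless $w_0 = 0$; therefore $v = 0$. The analogous substitution $v = e^t w$ disposes of $\tilde A^* v = -v$, so both deficiency subspaces are trivial and $\tilde A$ is skew-adjoint. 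The main obstacle is establishing the two-sided $L^2$-conservation from the renormalization hypothesis: this requires mirroring the approximation strategy of Theorem \ref{th2} on $\R^{n+1}$ to upgrade weak to strong $L^2_{loc}$ convergence of the smooth approximants and to control tails via the a priori estimates (\ref{6}), (\ref{6a}).
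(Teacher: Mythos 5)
Your proposal misses the key idea of the paper's proof and leaves the two hardest steps unestablished. The paper does not redo any analysis for the non-autonomous field: it introduces the auxiliary \emph{autonomous} equation (\ref{r2}) in the $n{+}1$ ``space'' variables $(s,x)$ with coefficient field $\tilde a(s,x)=(1,a(s,x))$ and a new time $t$, observes that the change of variables $u(t,s,x)=v(t+s,t,x)$ transforms (\ref{r2}) into (\ref{r1}), and then simply invokes Theorem~\ref{main}, already proved for autonomous solenoidal fields, in dimension $n+1$. Existence of renormalized and backward solutions, $L^2$-conservation, and the semigroup machinery are all inherited for free. You instead argue directly on $\R^{n+1}$, and both directions develop gaps.

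In the sufficiency direction, after applying Theorem~\ref{th3} to the reflected solution $\tilde u(t,x)=u(|t|,x)$ you obtain $\div_{(t,x)}(\tilde a\,g(\tilde u))=0$ in $\D'(\R^{n+1})$, but your claim that the weak trace of $g(u(t,\cdot))$ at $t=0^+$ equals $g(u_0)$ does not follow: $u(t,\cdot)\to u_0$ only weakly, $g$ is nonlinear, and a defect cannot be excluded. The reflection is useless for this purpose because it places the same unknown trace on both sides of $t=0$; the paper's Theorem~\ref{th4} avoids exactly this by gluing $u|_{t>0}$ not with its own reflection but with a renormalized solution of the backward problem, whose trace \emph{is} known to be $g(u_0)$ --- and constructing that backward renormalized solution for a time-dependent field again requires the reduction to (\ref{r2}). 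In the necessity direction, your deficiency-index computation (substituting $v=e^{\mp t}w$ and deriving $\|v(t,\cdot)\|_2=e^{\mp t}\|w_0\|_2\notin L^2(\R_t)$) is an attractive idea, but it rests entirely on the two-sided conservation $\|w(t,\cdot)\|_2\equiv\|w_0\|_2$ for an arbitrary $L^2_{loc}$ distributional solution. Renormalization together with Proposition~\ref{pro1} yields only the one-sided bound $\|w(t,\cdot)\|_2\le\|w_0\|_2$, which is perfectly compatible with $v\in L^2(\R^{n+1})$. Upgrading this to equality is precisely the content of Theorems~\ref{th2} and \ref{th5}--\ref{th7} (isometry of the semigroup when $d_+=0$, plus a separate construction when $d_-=0$), whose proofs use the autonomous structure: the characteristics of $\dot x=a(x)$, Stone's theorem, and Trotter--Kato for the groups $T_t^\nu$. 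Asserting that one can ``mirror the approximation strategy of Theorem~\ref{th2}'' for a time-dependent field assumes the conclusion; the honest repair is the paper's change of variables, which turns the whole statement into an immediate corollary of Theorem~\ref{main}.
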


\begin{proof}
Let us consider the extended transport equation
\begin{equation}\label{r2}
v_t+\tilde a(s,x)\cdot\nabla_{s,x} v=v_t+v_s+a(s,x)\cdot\nabla_x v=0,
\end{equation}
where $v=v(t,s,x)$, $t>0$, $(s,x)\in\R^{n+1}$. After the change $u(t,s,x)=v(t+s,t,x)$ we obtain the equation
$$
u_t+a(t,x)\cdot\nabla_x u=0,
$$
which coincides with (\ref{r1}). Therefore, any g.s. of (\ref{r1}) (~which necessarily admits some initial data (\ref{2})~) satisfies the renormalization property if and only if this is true for g.s. of equation (\ref{r2}). By Theorem~\ref{r1}, the latter is equivalent to the essential skew-adjointness of the operator $\tilde a(t,x)\cdot\nabla u$. The proof is complete.
\end{proof}
\end{remark}

 \section{Contraction semigroup, which provides g.s. and a criterion of the uniqueness}

In this section we study the general case when the skew-symmetric operator $A$ is not necessarily skew-adjoint. We proof that in this case there always exists a linear $C_0$-semigroup $T_t$ such that $u(t,x)=T_tu_0$ is a g.s. of (\ref{1}), (\ref{2}), and $\|T_tu_0\|_2\le\|u_0\|_2$ for all $u_0\in L^2$ (i.e., $T_t$ are contractions in $L^2$).
Let $\tilde A$ be a maximal skew-symmetric extension of $A$. Then $A\subset\tilde A\subset-\tilde A^*\subset -A^*$.
Denote by $d_+=d_+(\tilde A)=\codim\Im (E+\tilde A)$, $d_-=d_-(\tilde A)=\codim\Im (E-\tilde A)$ the deficiency indexes of $\tilde A$ (generally, these are cardinal numbers). Since $\tilde A$ is a maximal skew-symmetric operator, either $d_+=0$ or $d_-=0$.
Let us define $B=-\tilde A$ if $d_+=0$, $B=\tilde A^*$ if $d_-=0$ (observe that in the case $d_+=d_-=0$ the operator $\tilde A$ is skew-adjoint and $-\tilde A=\tilde A^*$~).

\begin{theorem}\label{th5}
The operator $B$ generates the semigroup of contractions $T_tu=e^{Bt}$ on $L^2$ such that $u(t,x)=T_tu_0$ is a g.s. of (\ref{1}), (\ref{2}) for every initial data $u_0\in L^2$. Moreover, in the case $d_+=0$ the operators $T_t$ are
isometric, that is $\|T_t u\|_2=\|u\|_2$ $\forall u\in L^2$.
\end{theorem}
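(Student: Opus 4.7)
The plan is to verify in each of the two cases that $B$ generates a contraction $C_0$-semigroup on $L^2$ and that $B\subset A^*$, after which the g.s.\ conclusion follows immediately from Lemma~\ref{lem1}. The skew-symmetry of $\tilde A$ combined with $A\subset\tilde A$ gives the inclusion chain $A\subset\tilde A\subset-\tilde A^*\subset-A^*$, which does essentially all the structural work: $\tilde A$ is closed (a maximal skew-symmetric extension is automatically closed, since any skew-symmetric operator has a skew-symmetric closure contained in $-\tilde A^*$), $\tilde A^*$ is densely defined and closed, and both candidates $B=-\tilde A$ and $B=\tilde A^*$ lie inside $A^*$ (in the first case, $-\tilde A\subset\tilde A^*\subset A^*$, using that $\tilde A\subset-\tilde A^*$; in the second, $\tilde A^*\subset A^*$ directly from $A\subset\tilde A$).

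In the case $d_+=0$, $B=-\tilde A$, I would apply the Lumer--Phillips theorem directly. The skew-symmetry of $\tilde A$ gives $(Bu,u)_2=-(\tilde Au,u)_2=0$, so $B$ is dissipative, and the hypothesis $d_+=0$ is precisely the range condition $\Im(E-B)=\Im(E+\tilde A)=L^2$. Lumer--Phillips then produces a contraction $C_0$-semigroup $T_t$. Moreover, the identity $(Bu,u)_2=0$ on $D(B)$ yields $\frac{d}{dt}\|T_tu\|_2^2=2(BT_tu,T_tu)_2=0$ for $u\in D(B)$ (using invariance of $D(B)$ under $T_t$), so $T_t$ is isometric on the dense subspace $D(B)$ and therefore on all of $L^2$.

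In the case $d_-=0$, $B=\tilde A^*$, dissipativity of $B$ on $D(\tilde A^*)$ is not manifest, and this is the main technical hurdle of the proof. I would circumvent it via the adjoint semigroup. Applying Lumer--Phillips to $\tilde A$ itself (closed, densely defined, dissipative because $(\tilde Au,u)_2=0$, and satisfying the range condition $\Im(E-\tilde A)=L^2$ by the hypothesis $d_-=0$) produces a contraction $C_0$-semigroup $S_t=e^{\tilde At}$ on $L^2$. Since $L^2$ is a Hilbert space, the dual semigroup $T_t\doteq S_t^*$ is itself a strongly continuous contraction semigroup, and its infinitesimal generator is exactly the Hilbert-space adjoint $(\tilde A)^*=B$. (The underlying computation is that taking adjoints of the resolvent bound $\|(\lambda E-\tilde A)^{-1}\|\le 1/\lambda$ for $\lambda>0$ gives $\|(\lambda E-B)^{-1}\|\le 1/\lambda$, and Hille--Yosida furnishes the semigroup.)

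In both cases $B$ generates a contraction $C_0$-semigroup $T_t$ on $L^2$ and $B\subset A^*$, so Lemma~\ref{lem1} applies and yields that $u(t,x)=T_tu_0(x)$ is a g.s. of (\ref{1}), (\ref{2}) for every $u_0\in L^2$. The isometry claim in the case $d_+=0$ was established in the corresponding step above, completing the proof.
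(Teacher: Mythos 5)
Your proposal is correct and follows essentially the same route as the paper: split on which deficiency index vanishes, obtain the contraction semigroup via Lumer--Phillips, note $B\subset A^*$ from the chain $A\subset\tilde A\subset-\tilde A^*\subset-A^*$, and invoke Lemma~\ref{lem1}; your isometry argument in the case $d_+=0$ is the same skew-symmetry observation, just written out. The only difference is in the case $d_-=0$: where the paper simply cites \cite{Clem} for the fact that the adjoint of an $m$-dissipative operator on a Hilbert space is again $m$-dissipative, you supply the standard proof of that fact via the adjoint semigroup $T_t=S_t^*$ (equivalently, the adjoint resolvent bound plus Hille--Yosida), which makes the argument self-contained.
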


\begin{proof}
If $d_+=0$ then $\Im (E+\tilde A)=L^2$ and the operator $B=-\tilde A$ is $m$-dissipative. By the Lumer-–Phillips theorem it generates the semigroup of contractions on $L^2$. Moreover, in this case $B$ is skew-symmetric and the operators $T_t=e^{Bt}$ are isometric.
In the remaining case when $d_-=0$ the operator $\tilde A$ is $m$-dissipative. Then (see \cite{Clem}) the operator $B=A^*$ is also $m$-dissipative and generates the semigroup of contractions. Since $-\tilde A\subset \tilde A^*\subset A^*$,
then $B\subset A^*$ and by virtue of Lemma~\ref{lem1} we conclude that the functions $u(t,x)=T_tu_0(x)$ are g.s. of
(\ref{1}), (\ref{2}).
\end{proof}

The following statement gives the criterion of uniqueness of a contraction semigroups constructed in Theorem~\ref{th5}.

\begin{theorem}\label{th6}
A contraction semigroups $T_t$, which provides g.s. $T_tu_0$, is unique if and only if $A$ is a maximal skew-symmetric operator.
\end{theorem}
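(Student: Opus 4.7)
The plan is to reduce the question to a classification of $m$-dissipative sub-operators of $A^*$ and then carry out that classification via von Neumann's decomposition.

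First, I combine Lemma~\ref{lem1} with the Lumer--Phillips theorem to reformulate the question: a contraction semigroup $T_t$ on $L^2$ provides g.s.\ of (\ref{1}), (\ref{2}) if and only if its generator $B$ satisfies $B\subset A^*$ and is $m$-dissipative, and conversely every such $B$ generates a contraction semigroup providing g.s. Hence uniqueness of $T_t$ is equivalent to uniqueness of an $m$-dissipative operator $B$ with $B\subset A^*$.

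For the sufficiency, suppose $A$ is maximal skew-symmetric, so that one of $d_\pm(A)$ vanishes. The main tool will be the von Neumann decomposition $D(A^*)=D(A)\dotplus N_+\dotplus N_-$ (topological direct sum in the graph norm, with $N_\pm=\ker(A^*\pm I)$) together with the identity $(A^*u,u)_2=\|u_-\|_2^2-\|u_+\|_2^2$ for $u=u_0+u_++u_-$, which follows by direct expansion from the skew-symmetry of $A$ and the eigenrelations $A^*u_\pm=\mp u_\pm$. If $d_+=0$, then $N_+=\{0\}$ and dissipativity forces $u_-=0$ throughout $D(B)$, so $D(B)\subset D(A)$ and $B\subset -A$; since $-A$ is itself $m$-dissipative (as in Theorem~\ref{th5}) and $m$-dissipative operators admit no proper $m$-dissipative extension, this yields $B=-A$. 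If $d_-=0$, then $N_-=\{0\}$, dissipativity is automatic on $D(A^*)$, and the same extension principle applied to the $m$-dissipative $A^*$ gives $B=A^*$. In either case $B$ coincides with the generator from Theorem~\ref{th5}, so uniqueness holds. The delicate point here is the signature identity, which is where the skew-symmetry of $A$ meets the defect-space structure of $A^*$.

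For the necessity, suppose $d_+(A)>0$ and $d_-(A)>0$. The standard extension theory (applied to $iA$) then furnishes at least two distinct maximal skew-symmetric extensions $\tilde A_1\neq \tilde A_2$ of $A$, corresponding to distinct partial isometries between $N_+$ and $N_-$. Theorem~\ref{th5} assigns to each $\tilde A_i$ a contraction semigroup providing g.s., with generator $B_i\in\{-\tilde A_i,\tilde A_i^*\}$. A final short case analysis confirms $B_1\neq B_2$: the two parallel cases are immediate from $\tilde A_1\neq\tilde A_2$, while the mixed case $-\tilde A_1=\tilde A_2^*$ would yield $\tilde A_1\subset -\tilde A_1^*=\tilde A_2$ and, by maximality of $\tilde A_1$, $\tilde A_1=\tilde A_2$, a contradiction. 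Hence at least two distinct contraction semigroups provide g.s.
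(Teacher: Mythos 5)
Your proposal is correct, and for the sufficiency half it takes a genuinely different route from the paper. The paper does not invoke the von Neumann decomposition at all: given a contraction semigroup with generator $C\subset A^*$, it builds an auxiliary operator $\tilde C$ on $D(C)+D(A)$ by $\tilde C(u+v)=Cu-Av$, checks well-definedness, and shows by a direct expansion (its identity (\ref{20})) that $(\tilde Cw,w)_2=(Cu,u)_2\le 0$; maximality of the $m$-dissipative $C$ then forces $D(A)\subset D(C)$, hence $-A\subset C\subset A^*$, after which it runs the same two-case endgame as you ($d_+=0\Rightarrow C=-A$, $d_-=0\Rightarrow C=A^*$). Your signature identity $(A^*u,u)_2=\|u_-\|_2^2-\|u_+\|_2^2$ reaches the same endgame more conceptually: it makes visible exactly why vanishing of one deficiency index gives the quadratic form of $A^*$ a definite sign, so that dissipativity either collapses $D(B)$ into $D(A)$ or is automatic on all of $D(A^*)$. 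The price is that you lean on the full defect decomposition $D(A^*)=D(A)\dotplus N_+\dotplus N_-$, which is standard for symmetric operators on complex spaces but here needs a word of justification (pass to $iA$ on the complexification of the real space $L^2$, or prove the real skew-symmetric analogue directly); the paper's computation (\ref{20}) is more elementary and self-contained. For the necessity half your argument is essentially the paper's, except that you supply the case analysis showing $B_1\neq B_2$ (including the mixed case $-\tilde A_1=\tilde A_2^*$), which the paper merely asserts; that is a welcome addition rather than a divergence.
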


\begin{proof}
If the skew-symmetric operator $A$ is not maximal (that is, $d_+(A),d_-(A)>0$), then there exist different maximal skew-symmetric extensions $\tilde A_1,\tilde A_2$, such that $d_+(\tilde A_1)=d_+(\tilde A_2)$, $d_-(\tilde A_1)=d_-(\tilde A_2)$. Then $m$-dissipative operators $B_1$, $B_2$ corresponding to $\tilde A_1$, $\tilde A_2$ are different. By the Hille-Yosida theorem they generates different semigroups. Therefore, the uniqueness assumption implies that $A$ is a maximal skew-symmetric operator. Conversely, suppose that the operator $A$ is maximal and $T_t$ is a contraction semigroup in $L^2$, which provides g.s. of problem (\ref{1}), (\ref{2}). Then, by the Lumer-–Phillips theorem, the infinitesimal generator $C$ of this semigroup is $m$-dissipative (maximal dissipative) and by Lemma~\ref{lem1} $C\subset A^*$.
Since also $-A\subset A^*$, we see that $Cx=-Ax$ $\forall x\in D(C)\cap D(A)$. This allows to define the linear operator $\tilde C$ on $D(\tilde C)=D(C)+D(A)$, setting $\tilde Cw=Cu-Av$ if $w=u+v$, $u\in D(C)$, $v\in D(A)$.
If $w=u_1+v_1=u_2+v_2$, where $u_1,u_2\in D(C)$, $v_1,v_2\in D(A)$, then $u_1-u_2=v_2-v_1\in D(C)\cap D(A)$ and
$C(u_1-u_2)=-A(v_2-v_1)$, which implies the equality $Cu_1-Av_1=Cu_2-Av_2$, showing that the value $\tilde Cw$ does not depend on a representation $w=u+v$, $u\in D(C)$, $v\in D(A)$. Thus, the operator $\tilde C$ is well-defined and by the construction $C\subset\tilde C$, $-A\subset\tilde C$. If $w=u+v$, where $u\in D(C)$, $v\in D(A)$, then
\begin{eqnarray}\label{20}
(\tilde Cw,w)_2=(Cu-Av,u+v)_2=(Cu,u)_2-(Av,v)_2+(v,Cu)_2-(Av,u)_2\nonumber\\ =(Cu,u)_2-(Av,v)_2+(v,A^*u)_2-(Av,u)_2=(Cu,u)_2,
\end{eqnarray}
where we use that $C\subset A^*$ and the relations $(Av,u)_2=(v,A^*u)_2$, $(Av,v)=0$ (we recall that $A$ is skew-symmetric). Since the operator $C$ is dissipative, then $(Cu,u)_2\le 0$ (see \cite{Clem}~) and it follows from (\ref{20}) that $(\tilde Cw,w)_2\le 0$ for all $w\in D(\tilde C)$. This means that $\tilde C$ is a dissipative operator. But $C\subset\tilde C$ while $C$ is a maximal dissipative operator. Therefore, $C=\tilde C$ and in particular $D(\tilde C)=D(C)+D(A)=D(C)$. Hence, $D(A)\subset D(C)$ and $-A\subset C\subset A^*$. We recall that $A$ is a maximal skew-symmetric operator, so that either $d_+(A)=0$ or $d_-(A)=0$. In the first case $\Im (E+A)=L^2$, that is, $-A$ is $m$-dissipative operator. From the relation $-A\subset C$ it now follows that $C=-A=B$.
In the second case $\Im(E-A)=L^2$ and $A$ is an $m$-dissipative operator. By the known property (see \cite{Clem})  $A^*$ is an $m$-dissipative operator as well. Since operator $C$ is also $m$-dissipative, it follows from the relation $C\subset A^*$ that $C=A^*=B$. In both cases $C$ coincides with the operator $B$ from Theorem~\ref{th5}.
This, in turn, implies the uniqueness of the semigroup $T_t$.
\end{proof}

Now we are ready to prove part (ii) of main Theorem~\ref{main} claiming that the uniqueness of any g.s. holds if and only if the operator $A$ is skew-adjoint that, in turn, is equivalent to the renormalization property.
It is clear that the renormalization property for every g.s. implies the uniqueness. The inverse statement is a consequence of the following theorem.

\begin{theorem}\label{th7}
Assume that any g.s. of problem (\ref{1}), (\ref{2}) with $u_0\in L^2$ is unique in the class of g.s. with bounded  $\|u(t,\cdot)\|_2$. Then these g.s. satisfy the renormalization property and, therefore, the operator $A$ is skew-adjoint.
\end{theorem}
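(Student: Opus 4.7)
The plan is to derive the renormalization property from uniqueness and then invoke Theorem~\ref{th2} to conclude that $A$ is skew-adjoint. First, I would re-run the opening of the proof of Theorem~\ref{th2}: take $u_0\in L^\infty(\R^n)\cap L^2(\R^n)$ of compact support (the general case reduces to this via the truncation arguments in Corollaries~\ref{cor1} and~\ref{cor2}), form the regularized g.s.\ $u_\nu(t,x)=T_t^\nu u_0(x)$, and observe via Proposition~\ref{pro2}(i) that both $u_\nu$ and $(u_\nu)^2$ are g.s.\ of (\ref{13}),(\ref{2}) with initial data $u_0$ and $u_0^2$, each lying in $L^\infty\cap L^2$. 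Weak compactness in $L^1_{loc}(\bar\Pi)$ produces limits $u_\nu\rightharpoonup u$ and $(u_\nu)^2\rightharpoonup v$, each a g.s.\ of (\ref{1}),(\ref{2}); by the uniqueness hypothesis applied to $u_0$ and to $u_0^2$ separately, these limits are unique and the entire sequences converge (not merely subsequences). Finite propagation (Proposition~\ref{pro1} applied to the nonnegative g.s.\ $(u_\nu)^2$) together with the uniform $L^\infty$-bound $(u_\nu)^2\le\|u_0\|_\infty^2$ provides uniform integrability on any $[0,T]\times\R^n$, so $\int v(t,x)\,dx = \lim_\nu\int(u_\nu)^2(t,x)\,dx = \|u_0\|_2^2$ for a.e.\ $t>0$.

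The main obstacle is to identify $v$ with $u^2$. Weak lower semicontinuity gives $u^2\le v$ a.e., so this identification reduces to the isometry $\|u(t,\cdot)\|_2=\|u_0\|_2$ for the contraction semigroup $T_t$ supplied by Theorem~\ref{th5}. By Theorem~\ref{th6} the uniqueness hypothesis already forces $A$ to be maximal skew-symmetric, leaving the two cases $d_+(A)=0$ (isometric $T_t$) and $d_-(A)=0$ (a priori strictly contractive $T_t$). I would rule out a strictly contractive $T_t$ by showing that any defect $v-u^2\not\equiv 0$ would translate, via the time-reflection extension of Remark~\ref{rem1} and the skew-symmetric operator $\tilde A$ of Lemma~\ref{lem2} on $L^2(\R^{n+1})$, into a nonzero g.s.\ of the extended Cauchy problem with vanishing trace at $t=0$, contradicting the uniqueness hypothesis inherited by $\tilde A$. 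The key input is that uniqueness in the class of bounded $L^2$-norm g.s.\ persists for both the $a$ and the reflected $-a$ problems; combining them rules out the strictly contractive case and forces the isometry.

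Once $v=u^2$ is established, the standard weak-plus-norm-convergence criterion yields strong convergence $u_\nu\to u$ in $L^2_{loc}(\bar\Pi)$; then for every bounded continuous $g$, $g(u_\nu)\to g(u)$ in $L^1_{loc}$, and passage to the limit in the renormalization identity for $u_\nu$ (Proposition~\ref{pro2}(i)) yields the same identity for $u$ with $a$ in place of $a_\nu$, showing that $g(u)$ is a g.s.\ of (\ref{1}),(\ref{2}) with initial datum $g(u_0)$. The extension to arbitrary $g\in C(\R)$ with $g(u_0),g(u)\in L^1_{loc}$ uses cut-offs $g_k=\max(-k,\min(g,k))$ and dominated convergence as in Corollary~\ref{cor1}; the extension to $u_0\in L^2_{loc}(\R^n)$ uses the finite-propagation decomposition of Corollary~\ref{cor2}. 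Every g.s.\ therefore satisfies the renormalization property, and Theorem~\ref{th2} concludes the proof by identifying $A$ as skew-adjoint.
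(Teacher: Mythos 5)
Your overall architecture (regularize, pass to weak limits $u_\nu\rightharpoonup u$, $(u_\nu)^2\rightharpoonup v$, reduce everything to the isometry $\|u(t,\cdot)\|_2=\|u_0\|_2$, upgrade to strong convergence, then transfer the renormalization identity and finish with Theorem~\ref{th2}) is the same as the paper's, and in the case $d_+(A)=0$ your argument is essentially the paper's: Theorem~\ref{th5} hands you the isometry directly and the rest goes through. The genuine gap is in the case $d_-(A)=0$, which is the whole difficulty of the theorem. Your proposed resolution rests on two things that do not hold up. First, you invoke Lemma~\ref{lem2} and the operator $\tilde A$ on $L^2(\R^{n+1})$; but Lemma~\ref{lem2} is proved under the standing assumption of Section~\ref{sec4} that $A$ is already skew-adjoint (its proof uses the unitary group $T_t=e^{-At}$), so appealing to it here is circular. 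Second, your ``key input'' --- that the uniqueness hypothesis persists for the time-reflected problem with coefficient $-a$ --- is exactly what cannot be assumed: the hypothesis of Theorem~\ref{th7} concerns only the forward problem, and if forward uniqueness automatically gave backward uniqueness, then both deficiency indices would vanish at once and the theorem would be nearly immediate. The asymmetry of the hypothesis is the point.

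The paper closes the $d_-(A)=0$ case differently and without ever assuming backward uniqueness. In that case $A$ is $m$-dissipative and the \emph{backward} equation $v_t-\div(av)=0$ is solved by an \emph{isometric} semigroup $S_t$. Fix $T>0$, $v_0\in L^2$, set $v(t,\cdot)=S_tv_0$, and build a forward g.s.\ by time reversal: $u(t,x)=v(T-t,x)$ on $[0,T)$, continued for $t\ge T$ by any g.s.\ with data $v_0$. This $u$ has initial datum $u_0=v(T,\cdot)$ and satisfies $\|u(t,\cdot)\|_2=\|u_0\|_2$ on $[0,T]$ because $S_t$ is isometric. The \emph{forward} uniqueness hypothesis then identifies $u$ with the weak limit $\tilde u$ of the approximations $u_k$ started from $u_0$, so $\|\tilde u\|_{L^2(\Pi_T)}=\sqrt{T}\|u_0\|_2=\|u_k\|_{L^2(\Pi_T)}$, weak convergence upgrades to strong, and the renormalization property follows on $\Pi_T$. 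Reversing time again and letting $T$ vary shows that $v=S_tv_0$ is a renormalized solution of the backward problem for every $v_0$, i.e.\ requirement (R) holds for the equation with coefficient $-a$; Theorem~\ref{th2} then gives that $-A$, hence $A$, is skew-adjoint, and Theorem~\ref{th4} finishes. You would need to replace your reflection-plus-$\tilde A$ argument with something of this kind; as written, that step of your plan would not go through.
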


\begin{proof}
It is clear that the uniqueness assumption implies the uniqueness of a contraction semigroups $T_t$, which provides g.s. By Theorem~\ref{th6} the operator $A$ is maximal skew-symmetric, that is, one of its deficiency indexes $d_+$ or $d_-$ is zero. In view of Theorem~\ref{th5} in the case $d_+=0$ the semigroup $T_t$ consists of isometric embeddings. Therefore, the g.s. $u=u(t,x)=T_tu_0(x)$ satisfies the property: $\|u(t,\cdot)\|_2=\|u_0\|_2$. Let $\tilde u=\tilde u(t,x)$ be a weak limit of a subsequence of g.s. $u_k(t,x)$ to the approximate problem (\ref{13}), (\ref{2}). Since $\|u_k(t,\cdot)\|_2=\|u_0\|_2$, then
\begin{equation}\label{21}
\|u_k\|_{L^2(\Pi_T)}=\sqrt{T}\|u_0\|_2=\|u\|_{L^2(\Pi_T)},
\end{equation}
where $\Pi_T=(0,T)\times\R^n$.
Since $u$, $\tilde u$ are g.s. of the same problem (\ref{1}), (\ref{2}), then by the uniqueness assumption
$u=\tilde u$. Hence $u_k\rightharpoonup u$ as $k\to\infty$ weakly in $L^2(\Pi_T)$ while in view of (\ref{21})
$\|u\|_{L^2(\Pi_T)}=\|u_k\|_{L^2(\Pi_T)}$ for all $k\in\N$. By the known property of weak convergence we conclude
that $u_k\to u$ as $k\to\infty$ strongly in $L^2(\Pi_T)$ for all $T>0$. As in the proof of Corollary~\ref{cor1},
this implies that $u$ is a renormalized solution of (\ref{1}), (\ref{2}). Thus, requirement (R) is fulfilled and by Theorem~\ref{th2} the operator $A$ is skew-adjoint.

Now we consider the case when $d_-=0$. In this case the operator $-A$ generates the semigroup $S_t$ of isometries in $L^2$. We choose $T>0$ and set
$$
u=u(t,x)=\left\{\begin{array}{lr} v(T-t,x), & 0\le t<T, \\ \bar u(t-T,x), & t\ge T, \end{array}\right.
$$
where $v(t,x)=S_tv_0(x)$ and $\bar u=\bar u(t,x)$ is a g.s. of (\ref{1}), (\ref{2}) with initial data $v_0\in L^2$. It is easy to verify that $u(t,x)$ is a g.s. of problem (\ref{1}), (\ref{2}) with the initial function $u_0=\tilde u(T,\cdot)$. By the uniqueness of this g.s. $u=\tilde u$, where, as above, $\tilde u=\tilde u(t,x)$ is a weak limit of the sequence $u_k(t,x)$ of g.s. to approximate problem (\ref{13}), (\ref{2}). We see that
$$
\|\tilde u\|_{L^2(\Pi_T)}=\|u\|_{L^2(\Pi_T)}=\|v\|_{L^2(\Pi_T)}=\sqrt{T}\|u_0\|_2=\|u_k\|_{L^2(\Pi_T)} \ \forall k\in\N.
$$
As was shown in the first part of our proof, this implies the strong convergence $u_k\mathop{\to}\limits_{k\to\infty} u$ in $L^2(\Pi_T)$ and, therefore, the renormalization property. By the latter we find that $v(t,x)$ is a renormalized solution of the Cauchy problem for the equation $v_t-\div av=0$ with initial data $v_0$ (we also take into account that $T>0$ is arbitrary). Thus, requirement (R) for this equation is satisfied and by Theorem~\ref{th2}
we conclude that the operator $-A$ is skew-adjoint. This, in turn, implies that $A$ is a skew-adjoint operator.
By Theorem~\ref{th4} we see that any g.s. of (\ref{1}), (\ref{2}) is a renormalized solution of this problem as well. The proof is complete.
\end{proof}

\section{Generalized characteristics}
We assume that the operator $A$ is skew-adjoint. By Theorem~\ref{th4} for every $u_0(x)\in L^\infty=L^\infty(\R^n)$ there exists a unique g.s. $u(t,x)\in L^\infty(\Pi)$ of the problem (\ref{1}), (\ref{2}), and this g.s. is a renormalized solution as well. It is clear that $\|u\|_\infty\le M\doteq\|u_0\|_\infty$ (this can be derived from the renormalization property. Indeed, $v=(|u|-M)^+$ ia a g.s. of   (\ref{1}), (\ref{2}) with initial data $(|u_0|-M)^+=0$, which implies that $v=0$, i.e., $|u|\le M$~). As readily follows from the definition of g.s. and the renormalization property, the functions $t\to p(u(t,\cdot))$ are weakly continuous on some set of full measure for every $p(u)\in C(\R)$, which implies that the map $t\to u(t,\cdot)$ is strongly continuous in $L^1_{loc}(\R^n)$. In particular, after possible correction of $u$ on the set of null measure, we may and will assume that the functions $u(t,\cdot)\in L^\infty$ are well-defined for all $t\ge 0$ and depend continuously on $t$ (in the space $L^1_{loc}(\R^n)$~). Let $u_1=u_1(t,x)$, $u_2=u_2(t,x)$ be g.s. of problem (\ref{1}), (\ref{2})  with initial functions $u_{01}=u_{01}(x)$, $u_{02}=u_{02}(x)$, respectively. Then, by the renormalization property $u_1u_2=[(u_1+u_2)^2-u_1^2-u_2^2]/2$ is a g.s. of (\ref{1}), (\ref{2}) with the initial data $u_{01}u_{02}=[(u_{01}+u_{0}2)^2-u_{01}^2-u_{02}^2]/2$. Hence, the map $T_t(u_0)=u(t,\cdot)$ is a homomorphism of the algebra $L^\infty$: $T_t(uv)=T_tuT_tv$ for all $u,v\in L^\infty(\R^n)$. Obviously, the semigroup $T_t$ can be extended to the group $T_t$ of isomorphisms of $L^\infty$. These isomorphisms generate the corresponding homeomorphisms $y_t:\S\to\S$ of the spectrum $\S$ of $C^*$-algebra $L^\infty$, so that
\begin{equation}\label{22}
\widehat{u(t,\cdot)}(X)=\widehat{u_0}(y_t(X)) \quad \mbox{ for all } X\in\S,
\end{equation}
 where $\widehat{u}\in C(\S)$ denotes the Gelfand transform of $u\in L^\infty$: $\widehat u(X)=\<X,u\>$
(recall that $\S$ consists on multiplicative functionals $X:L^\infty\to\C$~). Denote by $x_t:\S\to\S$ the inverse homeomorphism $x_t=y_t^{-1}$. Then (\ref{22}) can be written as
$$\widehat{u(t,\cdot)}(x_t(X_0))=\widehat{u_0}(X_0) \quad \forall X_0\in\S,$$ that is, $\widehat{u(t,\cdot)}$ remains constant on the curve $X(t)=x_t(X_0)$, $t\in\R$. It is natural to call this curve the generalized characteristic of equation (\ref{1}). In other words, $X(t)$ can be considered as a generalized solution to characteristic system (\ref{10}) (extended to $\S$) with initial data $X(0)=X_0$.

Let us describe the spectrum $\S$. The below characterization of $\S$ is rather well-known but we cannot find the appropriate references and, therefore, give the description of $\S$
in details. First of all, we introduce the notion of essential ultrafilter.

We call sets $A,B\subset\R^n$ equivalent: $A\sim B$ if $\mu(A\vartriangle B)=0$, where $A\vartriangle B=(A\setminus B)\cup (B\setminus A)$ is the symmetric difference and $\mu$ is the outer Lebesgue measure.
Let $\mathfrak{F}$ be a filter in $\R^n$. This filter is called \textit{essential} if from the conditions
$A\in\mathfrak{F}$ and $B\sim A$ it follows that $B\in\mathfrak{F}$. It is clear that an essential filter cannot include sets
of null measure, since such sets are equivalent to $\emptyset$.
Using Zorn's lemma, one can prove that any essential filter is contained in a maximal essential filter. Maximal
essential filters are called essential ultrafilters.

\begin{lemma}\label{lem3} Let $\mathfrak{U}$ be an essential ultrafilter. Then for each $A\subset\R^n$ either $A\in\mathfrak{U}$ or $\R^n\setminus A\in\mathfrak{U}$.
\end{lemma}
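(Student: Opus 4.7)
The plan is to mimic the classical ultrafilter dichotomy, but systematically replace ``nonempty'' with ``having positive outer measure'' because $\mathfrak{U}$ is essential (and thus stable under the equivalence $\sim$). I will argue by contradiction: assume both $A\notin\mathfrak{U}$ and $\R^n\setminus A\notin\mathfrak{U}$, and construct two sets in $\mathfrak{U}$ whose intersection has measure zero, which is impossible since an essential filter contains no null sets.

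First I would establish the following characterization, which is the essential-filter analogue of the usual ultrafilter criterion: \emph{$A\in\mathfrak{U}$ if and only if $\mu(A\cap B)>0$ for every $B\in\mathfrak{U}$.} The ``only if'' direction is trivial since $A\cap B\in\mathfrak{U}$ is not equivalent to $\emptyset$. For the nontrivial direction, assume $A\notin\mathfrak{U}$ and suppose, for the sake of the characterization, that $\mu(A\cap B)>0$ for every $B\in\mathfrak{U}$. Then the collection
\[
\mathfrak{F}_A=\{C\subset\R^n : \exists B\in\mathfrak{U}\ \text{with}\ \mu((A\cap B)\setminus C)=0\}
\]
is easily checked to be an essential filter: it is closed under supersets and finite intersections, it is closed under $\sim$ by construction, and it does not contain any null set precisely because $\mu(A\cap B)>0$ for all $B\in\mathfrak{U}$. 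Moreover $\mathfrak{U}\subset\mathfrak{F}_A$ (take $C=B$) and $A\in\mathfrak{F}_A$ (take $B=\R^n$), so $\mathfrak{F}_A$ strictly contains $\mathfrak{U}$, contradicting maximality. Thus there must exist $B\in\mathfrak{U}$ with $\mu(A\cap B)=0$, proving the characterization by contrapositive.

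With this in hand, the lemma is immediate. If $A\notin\mathfrak{U}$ and $\R^n\setminus A\notin\mathfrak{U}$, the characterization yields $B_1,B_2\in\mathfrak{U}$ such that $\mu(A\cap B_1)=0$ and $\mu((\R^n\setminus A)\cap B_2)=0$. Setting $B=B_1\cap B_2\in\mathfrak{U}$, we have
\[
B=(A\cap B)\cup((\R^n\setminus A)\cap B)\subset(A\cap B_1)\cup((\R^n\setminus A)\cap B_2),
\]
so $\mu(B)=0$, i.e.\ $B\sim\emptyset$. Since $\mathfrak{U}$ is essential, $\emptyset\in\mathfrak{U}$, contradicting the fact that $\mathfrak{U}$ is a proper filter.

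The only real obstacle I anticipate is verifying that $\mathfrak{F}_A$ is genuinely an essential filter---in particular, checking closure under finite intersections (which uses that $A\cap(B_1\cap B_2)=(A\cap B_1)\cap(A\cap B_2)$ and a short set-theoretic manipulation) and, most crucially, that $\mathfrak{F}_A$ contains no null sets, since this is exactly where the positivity hypothesis $\mu(A\cap B)>0$ for all $B\in\mathfrak{U}$ must be used. Everything else is a straightforward translation of the classical ultrafilter proof into the measure-theoretic setting.
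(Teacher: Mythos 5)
Your proof is correct, but it takes a genuinely different (and longer) route than the paper's. The paper argues in three lines: assuming $A\notin\mathfrak{U}$, it defines $\mathfrak{F}=\{\,B\subset\R^n \mid B\cup A\in\mathfrak{U}\,\}$ --- in effect the filter generated by $\mathfrak{U}$ together with $\R^n\setminus A$ --- checks that it is an essential filter containing $\mathfrak{U}$, notes $\R^n\setminus A\in\mathfrak{F}$, and concludes $\mathfrak{F}=\mathfrak{U}$ by maximality; no measure estimates are needed because the consistency of adjoining $\R^n\setminus A$ follows at once from $A\notin\mathfrak{U}$. You instead adjoin $A$ itself, which forces you first to prove the intermediate criterion ``$A\in\mathfrak{U}$ iff $\mu(A\cap B)>0$ for all $B\in\mathfrak{U}$'' via the auxiliary filter $\mathfrak{F}_A$ (essentially the filter generated by $\mathfrak{U}\cup\{A\}$ saturated under the null-set equivalence), and then to finish with a separate subadditivity argument exhibiting a null set in $\mathfrak{U}$. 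All the verifications you flag do go through: closure of $\mathfrak{F}_A$ under finite intersections, the absence of null sets in $\mathfrak{F}_A$ (which is exactly where $\mu(A\cap B)>0$ enters), and the fact that outer Lebesgue measure suffices throughout since monotonicity and subadditivity hold for arbitrary sets. What your detour buys is the membership criterion itself, the precise measure-theoretic analogue of the classical ``meets every filter set'' characterization of ultrafilters, which is independently useful; what the paper's construction buys is brevity and the avoidance of any measure-theoretic bookkeeping.
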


\begin{proof} Assuming that $A\notin\mathfrak{U}$, we introduce
$$\mathfrak{F}=\{ \ B\subset\R^n \ | \ B\cup A\in\mathfrak{U} \ \}.$$
Obviously, $\mathfrak{F}$ is an essential filter, $\R^n\setminus A\in\mathfrak{F}$, and $\mathfrak{U}\le\mathfrak{F}$. Since the filter
$\mathfrak{U}$ is maximal, we obtain that $\mathfrak{U}=\mathfrak{F}$. Hence, $\R^n\setminus A\in\mathfrak{U}$. The proof is complete.
\end{proof}

The property indicated in Lemma~\ref{lem3} is the characteristic property of ultrafilters, see for example, \cite{Burb}. Therefore, we obtain the following statement.

\begin{corollary}\label{cor3} Any essential ultrafilter is an ultrafilter, i.e. a maximal element in a set of all filters. \end{corollary}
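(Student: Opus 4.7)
The statement is essentially a direct consequence of Lemma~\ref{lem3} combined with a classical characterization of ultrafilters, so the plan is short and the main work has already been done. First I would recall the classical characterization: a filter $\mathfrak{F}$ on a set $X$ is an ultrafilter (i.e.\ a maximal element in the poset of all filters under inclusion) if and only if for every $A\subset X$ exactly one of $A\in\mathfrak{F}$ or $X\setminus A\in\mathfrak{F}$ holds. This is the reference to Bourbaki already cited just before the corollary in the paper, and I would simply invoke it.

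Second, I would apply this characterization with $X=\R^n$ and $\mathfrak{F}=\mathfrak{U}$, an arbitrary essential ultrafilter. Lemma~\ref{lem3} supplies precisely the nontrivial half of the biconditional: for any $A\subset\R^n$, either $A\in\mathfrak{U}$ or $\R^n\setminus A\in\mathfrak{U}$. (Exclusivity is automatic, since if both $A$ and $\R^n\setminus A$ belonged to $\mathfrak{U}$ then so would $\emptyset=A\cap(\R^n\setminus A)$, contradicting the fact that an essential filter contains no null sets, in particular no empty set.) Hence $\mathfrak{U}$ satisfies the characteristic property, and therefore $\mathfrak{U}$ is an ultrafilter in the usual sense.

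There is no real obstacle here; the content of the corollary is that maximality inside the restricted class of essential filters already forces maximality inside the ambient class of all filters. The only subtlety worth flagging in the write-up is that a priori an essential ultrafilter is only known to be maximal among \emph{essential} filters, so one must exhibit, for any hypothetical strictly larger ordinary filter $\mathfrak{F}\supsetneq\mathfrak{U}$, an element $A\in\mathfrak{F}\setminus\mathfrak{U}$; by Lemma~\ref{lem3} then $\R^n\setminus A\in\mathfrak{U}\subset\mathfrak{F}$, so $\emptyset=A\cap(\R^n\setminus A)\in\mathfrak{F}$, contradicting the filter axioms. This yields maximality in the full sense, completing the proof.
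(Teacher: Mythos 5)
Your proof is correct and follows the same route as the paper: the paper's own argument simply notes that the dichotomy established in Lemma~\ref{lem3} is the classical characteristic property of ultrafilters (citing Bourbaki) and concludes. Your additional spelling-out of the maximality argument via a hypothetical strictly larger filter containing $\emptyset$ is a harmless elaboration of the same idea.
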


\begin{lemma}\label{lem4} Let $\mathfrak{U}$ be an essential ultrafilter, and $f(x)$ be a bounded function in $\R^n$. Then there exists $\displaystyle\lim_{\mathfrak{U}} f(x)$. If a function $g(x)=f(x)$ almost everywhere on $\R^n$, then there exists $\displaystyle\lim_{\mathfrak{U}} g(x)=\lim_{\mathfrak{U}} f(\xi)$.
\end{lemma}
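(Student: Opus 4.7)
The plan is to handle the two assertions separately. For the existence of $\lim_{\mathfrak{U}} f$, I would exploit Corollary~\ref{cor3}, which tells us $\mathfrak{U}$ is already an ordinary ultrafilter, and apply a standard compactness/bisection argument on the compact interval $[-M,M] \supset f(\R^n)$, where $M = \sup|f|$. Concretely, for each $k \in \N$ partition $[-M,M]$ into finitely many half-open intervals of length $2^{-k}$. Their preimages form a finite partition of $\R^n$, so by the ultrafilter property exactly one preimage $f^{-1}(I_k)$ belongs to $\mathfrak{U}$. For any $k, l$ the set $f^{-1}(I_k) \cap f^{-1}(I_l) \in \mathfrak{U}$ is nonempty, hence $\overline{I_k} \cap \overline{I_l} \neq \emptyset$. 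The closed intervals $\overline{I_k}$ therefore have a common point $c \in [-M,M]$ with $\operatorname{diam} \overline{I_k} \to 0$; for every $\varepsilon > 0$ we have $f^{-1}(c-\varepsilon, c+\varepsilon) \supset f^{-1}(I_k) \in \mathfrak{U}$ for $k$ large enough, which is precisely the assertion $\lim_{\mathfrak{U}} f = c$.

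For the second assertion, set
$$
A = \{ x \in \R^n \mid g(x) = f(x) \}.
$$
Since $g = f$ almost everywhere, the complement $\R^n \setminus A$ has outer Lebesgue measure zero, i.e. $A \sim \R^n$. Because $\R^n \in \mathfrak{U}$ and $\mathfrak{U}$ is essential, this forces $A \in \mathfrak{U}$. In particular $g$ coincides with the bounded function $f$ on a member of $\mathfrak{U}$, so the limit $\lim_{\mathfrak{U}} g$ is well defined (the values of $g$ on $\R^n \setminus A$ are irrelevant, as that complement cannot meet any member of an essential ultrafilter: if $B \subset \R^n \setminus A$ lay in $\mathfrak{U}$, then $B \sim \emptyset$, contradicting that essential filters exclude null sets, as noted just before Lemma~\ref{lem3}). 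For any neighborhood $U$ of $c = \lim_{\mathfrak{U}} f$ we have $f^{-1}(U) \in \mathfrak{U}$, whence $f^{-1}(U) \cap A \in \mathfrak{U}$; on this intersection $g(x) = f(x) \in U$, so $g^{-1}(U) \supset f^{-1}(U) \cap A$ also lies in $\mathfrak{U}$. Therefore $\lim_{\mathfrak{U}} g = c = \lim_{\mathfrak{U}} f$.

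I do not anticipate a genuine obstacle: the first part is the standard ``bounded functions converge along ultrafilters on a compact image'' argument, and the second is a direct unwinding of the definition of \emph{essential} ultrafilter, whose sole purpose is exactly to render null-set modifications invisible. The only subtle point worth stating explicitly is that $g$ need not itself be bounded on the exceptional null set, but this is immaterial because $\R^n \setminus A$ cannot belong to $\mathfrak{U}$, so the (possibly wild) values of $g$ there never enter the filter computation.
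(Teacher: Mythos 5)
Your proof is correct and follows essentially the same route as the paper: both reduce to Corollary~\ref{cor3}, obtain existence of $\lim_{\mathfrak{U}} f$ from compactness of $[-M,M]$, and handle the a.e.\ modification by observing that $g^{-1}(V)$ contains a set equivalent to $f^{-1}(V)$, which essentiality places in $\mathfrak{U}$. The only difference is cosmetic: where the paper simply cites the fact that the pushforward ultrafilter $f_*\mathfrak{U}$ converges on the compact interval, you prove that fact from scratch via a partition argument.
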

\begin{proof}
By Corollary~\ref{cor3} \ $\mathfrak{U}$ is an ultrafilter. By the known properties of ultrafilters, the image $f_*\mathfrak{U}$ is an ultrafilter on the compact $[-M,M]$, where $M=\sup|f(x)|$, and this ultrafilter converges to some point $y\in [-M,M]$. Therefore, $\displaystyle\lim_{\mathfrak{U}} f(x)=\lim f_*\mathfrak{U}=y$. Further, suppose that a function $g=f$ a.e. on $\R^n$. Then the set $E=\{ x\in\R^n \ | \ g(x)\not=f(x) \ \}$ has null Lebesgue measure. Let $V$ be a neighborhood of $y$. Then $g^{-1}(V)\supset f^{-1}(V)\setminus E$. By the convergence of the ultrafilter $f_*\mathfrak{U}$ the set $f^{-1}(V)\in\mathfrak{U}$. Since $\mathfrak{U}$ is an essential ultrafilter while $f^{-1}(V)\setminus E\sim f^{-1}(V)$, then $f^{-1}(V)\setminus E\in\mathfrak{U}$. This set is contained in $g^{-1}(V)$, and we claim that $g^{-1}(V)\in\mathfrak{U}$. Since $V$ is an arbitrary neighborhood of $y$, we conclude that $\displaystyle\lim_{\mathfrak{U}} g(x)=y$. The proof is complete.
\end{proof}

By the statement of Lemma~\ref{lem4}, the functional $\displaystyle f\to\lim_{\mathfrak{U}} f(\xi)$ is well-defined on $L^\infty(\R^n)$ and it is a linear multiplicative functional on $L^\infty(\R^n)$. In other words, this functional belongs to the spectrum $\S$ of algebra $L^\infty(\R^n)$. Let us demonstrate that, conversely, any linear multiplicative functional on $L^\infty(\R^n)$ coincides with the limit along some essential ultrafilter.

\begin{theorem}\label{th8} For each $X\in\S$ there exists an essential ultrafilter $\mathfrak{U}$ such that
\begin{equation}\label{23}
\<X,f\>=\lim_{\mathfrak{U}} f(x) \quad \forall f\in L^\infty(\R^n).
\end{equation}
\end{theorem}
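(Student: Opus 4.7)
The plan is to produce the required essential ultrafilter $\mathfrak{U}$ from the character $X\in\S$ by exploiting the idempotent structure of $L^\infty(\R^n)$, and then to verify (\ref{23}) first on characteristic functions of measurable sets and extend by density of simple functions in the essential supremum norm.

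First I would observe that for every measurable $A\subset\R^n$ the function $\chi_A$ is an idempotent in $L^\infty(\R^n)$, so $\<X,\chi_A\>\in\{0,1\}$, and this value depends only on the equivalence class of $A$ modulo null sets since $\chi_A=\chi_{A'}$ in $L^\infty$ whenever $A\sim A'$. Multiplicativity of $X$ together with $\<X,1\>=1$ then implies that
$$\mathfrak{U}_0=\{\,A\subset\R^n \text{ measurable}\,:\,\<X,\chi_A\>=1\,\}$$
is closed under finite intersection, avoids null sets, and satisfies $A\in\mathfrak{U}_0\Leftrightarrow\R^n\setminus A\notin\mathfrak{U}_0$ for every measurable $A$. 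Next I would extend $\mathfrak{U}_0$ to arbitrary subsets of $\R^n$ by setting
$$\mathfrak{F}_0=\{\,B\subset\R^n\,:\,B\supset A\text{ for some }A\in\mathfrak{U}_0\,\},$$
and verify that $\mathfrak{F}_0$ is an essential filter on $\R^n$. The only nontrivial verification is essentialness: if $B\supset A\in\mathfrak{U}_0$ and $B\sim B'$, then $A\setminus B'\subset B\triangle B'$ has outer measure zero and is therefore contained in a measurable null set $N$; the set $A\setminus N$ is measurable, satisfies $A\setminus N\subset B'$, and still lies in $\mathfrak{U}_0$ because $\chi_{A\setminus N}=\chi_A$ in $L^\infty$, so $B'\in\mathfrak{F}_0$. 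By Zorn's lemma, $\mathfrak{F}_0$ is contained in some maximal essential filter $\mathfrak{U}$, which by Lemma~\ref{lem3} is an essential ultrafilter. A key consequence of the maximality of $\mathfrak{U}$ is that for every measurable $A$ one has $A\in\mathfrak{U}\Leftrightarrow A\in\mathfrak{U}_0$; otherwise $\R^n\setminus A\in\mathfrak{U}_0\subset\mathfrak{U}$, contradicting $\emptyset\notin\mathfrak{U}$.

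To finish, I would verify (\ref{23}) by observing that both $X$ and the functional $L_\mathfrak{U}\colon f\mapsto\lim_\mathfrak{U} f(x)$ (well-defined by Lemma~\ref{lem4}) are linear on $L^\infty(\R^n)$ with norm at most $1$; on the idempotent $\chi_A$ for any measurable $A$ they both equal $1$ precisely when $A\in\mathfrak{U}$. Linearity extends this equality to all simple measurable functions, and since such functions are dense in $L^\infty(\R^n)$ in the essential supremum norm, continuity of both functionals yields (\ref{23}) throughout $L^\infty(\R^n)$. The main obstacle is the extension step from the measurable Boolean algebra to $\mathcal{P}(\R^n)$: non-measurable subsets carry no a priori value under $X$, but Zorn's lemma combined with the maximality property of $\mathfrak{U}_0$ on the measurable layer guarantees an essential ultrafilter extension that does not alter any value already prescribed by $X$.
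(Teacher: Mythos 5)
Your construction of the filter is literally the paper's: your $\mathfrak{F}_0$, the collection of supersets of measurable sets $A$ with $\<X,\chi_A\>=1$, coincides with the filter $\mathfrak{F}$ the paper defines, and both proofs then pass to an essential ultrafilter refinement via Zorn's lemma. Where you genuinely diverge is in verifying (\ref{23}). The paper proves directly that for an arbitrary $f\in L^\infty$ the limit $\lim_{\mathfrak{F}}f$ already exists along the filter $\mathfrak{F}$ itself and equals $\lambda=\<X,f\>$: it sets $g=1/(f-\lambda)$ off the set $V_\varepsilon=\{|f-\lambda|<\varepsilon\}$, observes $g\,(f-\lambda)=\chi_{\overline{V_\varepsilon}}$, hence $\<X,\chi_{\overline{V_\varepsilon}}\>=0$ and $V_\varepsilon\in\mathfrak{F}$ --- the classical ``$\<X,f\>$ lies in the essential range of $f$'' argument --- after which refining to an ultrafilter costs nothing. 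You instead verify agreement only on indicators, where it reduces to the Boolean statement that for measurable $A$ one has $A\in\mathfrak{U}\Leftrightarrow\<X,\chi_A\>=1$ (your maximality observation that the extension cannot flip any value already prescribed on the measurable layer is the one extra step the paper does not need), and then propagate to all of $L^\infty$ by linearity, density of simple functions in the essential supremum norm, and the bound $\|L_{\mathfrak{U}}\|\le 1$, which is legitimate precisely because Lemma~\ref{lem4} makes the limit insensitive to modification on null sets. Both routes are sound; the paper's is more self-contained and yields the slightly stronger fact that the limit exists along $\mathfrak{F}$ before any refinement, while yours is the more routine $C^*$-algebraic argument resting on idempotents plus density.
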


\begin{proof}
We denote by $\chi_B=\chi_B(x)$ the indicator function of measurable set $B\subset\R^n$, and define
$$
\mathfrak{F}=\{ \ A\subset\R^n \ | \  \<X,\chi_B\>=1 \ \mbox{ for some measurable } B\subset A \ \}.
$$
It is directly verified that $\mathfrak{F}$ is an essential filter. Let us show that for every $f(x)\in L^\infty(\R^n)$ there exists
$\displaystyle\lim_{\mathfrak{F}} f(x)$. Let $\lambda=\<X,f\>$, $\varepsilon>0$,
$$V=V_\varepsilon=\{ \ x\in\R^n \ | \ |f(x)-\lambda|<\varepsilon \ \},$$
$\overline{V}=\R^n\setminus V$. It is clear that $V$ is a measurable set. We are going to prove that $\<X,\chi_V\>=1$. We define the function
$$
g(x)=\left\{\begin{array}{ccr} 1/(f(x)-\lambda) & , & x\in\overline{V}, \\ 0 & , & x\in V. \end{array}\right.
$$
Since $|f(x)-\lambda|\ge\varepsilon$ on the set $\overline{V}$, then $g(x)\in L^\infty(\R^n)$ and, evidently, ${g(x)(f(x)-\lambda)=\chi_{\overline{V}}}$. Therefore,
$$
\<X,\chi_{\overline{V}}\>=\<X,g\>(\<X,f\>-\lambda)=0.
$$
This implies that
$$
\<X,\chi_V\>=\<X,1-\chi_{\overline{V}}\>=1-\<X,\chi_{\overline{V}}\>=1,
$$
as was to be proved. Hence, $V=V_\varepsilon\in\mathfrak{F}$ for all $\varepsilon>0$, which means that ${\displaystyle\lim_{\mathfrak{F}} f(x)=\lambda=\<X,f\>}$. Notice that the latter relation holds for every $f\in L^\infty(\R^n)$.
Let $\mathfrak{U}$ be an essential ultrafilter such that $\mathfrak{F}\subset\mathfrak{U}$. Then relation (\ref{23}) is fulfilled.
\end{proof}

Notice that the essential ultrafilter indicated in Theorem~\ref{th8} is not unique, but it belongs to a unique equivalence class corresponding to the relation
\begin{equation}\label{24}
\mathfrak{U_1}\sim\mathfrak{U_2} \ \Leftrightarrow \ \lim_{\mathfrak{U_1}} f=\lim_{\mathfrak{U_2}} f \ \forall f\in L^\infty(\R^n)
\end{equation}
on the set of essential ultrafilters.

By Theorem~\ref{th8} any generalized characteristic $X(t)=x_t(X_0)$ can be described as a curve $\mathfrak{U}(t)$ on a set of essential ultrafilters

We call an ultrafilter $\mathfrak{U}$ bounded if it contains a bounded set. It is clear that a bounded ultrafilter $\mathfrak{U}$ contains some compact set $K$. Then
$\mathfrak{U}|_{K}=\{ \ B\in\mathfrak{U} \ | \ B\subset K \ \}$ is an ultrafilter on the compact $K$ and, therefore, it converges to some element $y\in K$. Then
$y=\lim\mathfrak{U}$. We have established that any bounded ultrafilter on $\R^n$ converges. Notice that, conversely,
if an ultrafilter $\mathfrak{U}$ converges, $y=\lim\mathfrak{U}$, then $\mathfrak{U}$ contains all neighborhoods of $y$ and, therefore, is bounded.

By Theorem~\ref{th8} any generalized characteristic $X(t)=x_t(X_0)$ can be described as a curve $\mathfrak{U}(t)$, $t\in\R$ on a set of essential ultrafilters, which is uniquely defined up to the equivalence (\ref{24}). We complete this section by the following result.

\begin{theorem}\label{th9}
Let $\mathfrak{U}(t)$, $t\in\R$, be a generalized characteristic. Assume that the essential ultrafilter $\mathfrak{U}(t_0)$ is bounded for some $t_0\in\R$. Then $\mathfrak{U}(t)$ is bounded for all $t\in\R$, and the curve $x(t)=\lim \mathfrak{U}(t)$, $t\in\R$, is Lipschitz: $|x(t)-x(t_0)|\le N|t-t_0|$.
\end{theorem}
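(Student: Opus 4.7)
Fix $t_0$ with $\mathfrak{U}(t_0)$ bounded and set $x_0 = \lim\mathfrak{U}(t_0) \in \R^n$. My plan is to combine the finite-speed-of-propagation estimates of Proposition~\ref{pro1}, the renormalization property guaranteed by Theorem~\ref{th4}, and the ultrafilter description of generalized characteristics through (\ref{22}). Fix $R > 0$, set $v_0(x) = \chi_{B_R(x_0)}(x)$ (where $B_R(x_0) = \{x : |x-x_0| < R\}$), and let $v(t,x) = T_{t-t_0}v_0(x)$ be the corresponding g.s.\ regarded as starting at time $t_0$. Because $v_0^2 = v_0$, renormalization together with uniqueness forces $v^2 = v$ almost everywhere, so $v(t,\cdot) = \chi_{A(t)}$ for some measurable $A(t)$. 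The group law gives $x_t = x_{t-t_0} \circ x_{t_0}$, so (\ref{22}) applied to the shifted flow yields
\begin{equation*}
\lim_{\mathfrak{U}(t)} v(t,\cdot) \;=\; \lim_{\mathfrak{U}(t_0)} v_0 \;=\; 1,
\end{equation*}
the last equality because $B_R(x_0)$ is a neighborhood of $x_0$ and therefore lies in $\mathfrak{U}(t_0)$. By Lemma~\ref{lem4} this forces $A(t) \in \mathfrak{U}(t)$ (choosing the representative that is literally contained in the bounding set below, which is permissible because $\mathfrak{U}(t)$ is essential).

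The quantitative localization of $A(t)$ comes from finite speed of propagation. The proof of Proposition~\ref{pro1} goes through verbatim when $|x|$ is replaced by $|x - x_0|$ and the time variable is shifted by $t_0$, because the only property of $a$ used is $|a(x)| \le N$. This gives, for $t \ge t_0$,
\begin{equation*}
\int_{|x - x_0| > R + N(t - t_0)} v(t,x)\,dx \;\le\; \int_{|x - x_0| > R} v_0(x)\,dx \;=\; 0,
\end{equation*}
so $A(t) \subset B_{R + N(t - t_0)}(x_0)$ up to a null set. For $t < t_0$, apply the same proposition to $w(s,x) = v(t_0 - s, x)$, which is a g.s.\ of the transport equation with coefficient $-a$ (still bounded by $N$ and solenoidal); this yields $A(t) \subset B_{R + N|t - t_0|}(x_0)$ in all cases. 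Upward closure of filters now gives $B_{R + N|t - t_0|}(x_0) \in \mathfrak{U}(t)$, which is a bounded set; hence $\mathfrak{U}(t)$ is bounded for every $t \in \R$.

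Since $\mathfrak{U}(t)$ is a bounded ultrafilter, $x(t) := \lim\mathfrak{U}(t)$ is well-defined in $\R^n$, and $x(t)$ lies in every closed set $F \in \mathfrak{U}(t)$: otherwise some neighborhood $V$ of $x(t)$ would satisfy $V \cap F = \emptyset$ with both $V, F \in \mathfrak{U}(t)$, contradicting that a filter cannot contain $\emptyset$. Applied to the closed ball $\overline{B_{R + N|t - t_0|}(x_0)} \in \mathfrak{U}(t)$ and then letting $R \to 0^+$, this gives $|x(t) - x(t_0)| \le N|t - t_0|$. Since all $\mathfrak{U}(t)$ are bounded, the same reasoning with any other time $t_1$ in place of $t_0$ yields the full Lipschitz bound $|x(t) - x(t_1)| \le N|t - t_1|$. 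The only step requiring real care is the re-derivation of Proposition~\ref{pro1} for a shifted center and, in the backward direction, for the transport equation with coefficient $-a$; beyond this, the argument is a routine combination of renormalization, finite speed of propagation, and the ultrafilter description of generalized characteristics.
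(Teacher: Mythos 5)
Your proof is correct and follows essentially the same route as the paper: propagate the indicator of a small ball around $x(t_0)$ as a g.s., use the finite-speed estimate of Proposition~\ref{pro1} to localize its support in the enlarged ball, and use the ultrafilter limit being $1$ (together with essentialness and Lemma~\ref{lem3}) to force the enlarged ball into $\mathfrak{U}(t)$, then shrink the radius. The only cosmetic difference is that you deduce $A(t)\in\mathfrak{U}(t)$ directly, where the paper argues by contradiction with the complement; these are equivalent.
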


\begin{proof}
Since the ultrafilter $\mathfrak{U}(t_0)$ is bounded, there exists the limit $x(t_0)=\lim\mathfrak{U}(t_0)$. Therefore, for every $\varepsilon>0$ the ball $$V_\varepsilon=\{ \ x\in\R^n \ | \ |x-x(t_0)|<\varepsilon \ \} \in\mathfrak{U}(t_0).$$ Denote by $u_0(x)$ the indicator function of this ball and let $u(t,x)\in L^\infty(\R^{n+1})$ be the unique g.s. of equation (\ref{1}) satisfying the Cauchy condition $u(t_0,x)=u_0(x)$. As readily follows from the statements of Proposition~\ref{pro1}, $u(t,x)=0$ for $|x-x(t_0)|\ge \varepsilon+N|t-t_0|$. By the definition of generalized characteristics
\begin{equation}\label{25}
u(t,x)=\lim_{\mathfrak{U}(t)} u(t,\cdot)=\lim_{\mathfrak{U}(t_0)} u_0=1.
\end{equation}
Let us show that the ball $$V_{\varepsilon+N|t-t_0|}=\{ \ x\in\R^n \ | \ |x-x(t_0)|<\varepsilon+N|t-t_0| \ \}\in\mathfrak{U}(t).$$ Otherwise, its complement $\overline{V_{\varepsilon+N|t-t_0|}}\in\mathfrak{U}(t)$.
Since $u(t,x)=0$ on this set, we claim that $\lim\limits_{\mathfrak{U}(t)} u(t,\cdot)=0$.
This contradicts (\ref{25}), therefore, we conclude that $V_{\varepsilon+N|t-t_0|}\in\mathfrak{U}(t)$. Hence,
the ultrafilter $\mathfrak{U}(t)$ is bounded and $x(t)\doteq\lim\mathfrak{U}(t)$ lays in the closure of $V_{\varepsilon+N|t-t_0|}$. This implies that $|x(t)-x(t_0)|\le \varepsilon+N|t-t_0|$. Since $\varepsilon>0$ is arbitrary, we conclude that $|x(t)-x(t_0)|\le N|t-t_0|$.
\end{proof}

Remark that the curves $x=x(t)=\lim\mathfrak{U}(t)$, $t\in\R$ can be treated as the projection of a generalized characteristic $\mathfrak{U}(t)$ on the ``physical'' space $\R^n$. In some sense $x(t)$ can be interpreted as a solution of characteristic system (\ref{10}). As opposed to classic solutions, $x(t)$ is not uniquely determined by
$(t_0,x(t_0))$, actually it is determined by a point $(t_0,\mathfrak{U}(t_0))$.

{\bf Acknowledgement.}
This research was carried out with the financial support of the Russian Foundation for Basic Research (grant no. 15-01-07650-a) and the Ministry of Education and Science of Russian Federation, project no. 1.857.2014/K in the framework of state task.

\end{document}